\documentclass[12pt,BCOR=9.20mm,paper=a4]{scrartcl}
\usepackage{ayv_options}
\newcommand{\Rdd}{\mathbb{R}^{d\times d}}
\newcommand{\Sd}{S_d^+}
\newcommand{\Lebesgue}{\mathscr{L}^1}
\newcommand{\prd}{\mathscr{P}^2(\rd)}
\title{Fokker-Planck-Kolmogorov  inclusions of the mean field type}
\author{Yurii Averboukh}\address{HSE University, Moscow, Russia;\\ & Krasovskii Institute of Mathematics and Mechanics,\\ &  Yekaterinburg, Russia; \\ & Ural Federal University, Yekaterinburg, Russia; } \email{averboukh@gmail.com}

\date{}

\begin{document}
\maketitle
\begin{abstract}
	The paper studies a dynamical system in the Wasserstein space, where the evolution is governed by a Fokker-Planck-Kolmogorov equation with coefficients chosen from a prescribed set that depends at each point on that point and on the current measure. Under the assumption that the multivalued mapping determining the constraints is convex-valued, upper semicontinuous and satisfies certain growth conditions, we prove an existence theorem and establish the compactness of the solution set. Moreover, we study an optimal control problem for such dynamical systems. 
	\keywords{Fokker-Planck-Kolmogorov equation, mean field type control problem, differential inclusion, optimal control problem}
	\msccode{34G25,49J21,35Q84}
\end{abstract}
\section{Introduction}
This paper studies the nonlinear parabolic Fokker-Planck-Kolmogorov (FPK) equation in which the coefficients are not fixed, but are instead chosen from a prescribed set that depends on the space-time point and on the current measure describing the solution. More precisely, let there exists a multivalued mapping \(F\) assigning to a time \(t\), a phase vector \(x\) and a probability measure \(m\) a set of pairs \((\beta,\Sigma)\), where \(\beta\) is a vector, while \(\Sigma\) is a nonnegative symmetric matrix. The problem is to find a flow of probabilities \((m_t)_{t\in [0,T]}\)
satisfying the Fokker–Planck–Kolmogorov equation
\begin{equation}\label{intrd:eq:FPK}
	\partial_t m_t + L_t^* m_t = 0
\end{equation}
in the distributional sense for some generator \(L_t\)
defined by
\begin{equation}\label{intrd:equality:L}
	L_t\phi(x)=\nabla\phi(x)\cdot b(t,x)+\frac{1}{2}\operatorname{tr}\bigl(\nabla^\top\nabla\phi(x)\cdot A(t,x)\bigr)
\end{equation}
such that
\begin{equation}\label{intrd:cond:b_A}
	(b(t,x),A(t,x))\in F(t,x,m_t)\quad\text{for }m_t\text{-a.e. }x\in \mathbb{R}^d\text{ and a.e. }t\in [0,T].
\end{equation}

Define \(V_F(t,m)\) as the set of generators 
\(L\) such that \(L\phi(x)=\nabla\phi(x)\cdot b(x)+\frac{1}{2}\operatorname{tr}(\nabla^\top\nabla\phi(x)\cdot A(x))\) for some \((b(x),A(x))\in F(t,x,m)\). Then, formally, problem~\eqref{intrd:eq:FPK}--\eqref{intrd:cond:b_A} can be rewritten in the form 
\[\partial_tm_t+(V_F(t,m_t)^*m_t)\ni 0.\] Thus, we call  problem~\eqref{intrd:eq:FPK}--\eqref{intrd:cond:b_A} a Fokker-Planck-Kolmogorov inclusion of the mean field type.

The FPK inclusion of the mean field type naturally appears within the study of nonlinear FPK equations with the mean field interaction in the case where the coefficients can be discontinuous. Indeed, if one consider the equation 
\[\partial_tm_t+(L_t[m_t])^*m_t=0\] with 
\[L_t[m]\phi(x)=\nabla\phi(x)\cdot b(t,x,m)+\frac{1}{2}\operatorname{tr}(\nabla^\top\nabla\phi(x)\cdot A(t,x,m))\] and let the coefficients to be discontinuous, the trick coming from~\cite{filippov1988differential} is to define
\[F(t,x,m)\triangleq \bigcap_{\varepsilon>0}\operatorname{co}\big\{(b(t',x',m'),A(t',x',m')):\,(t',x,m')\in B_\varepsilon(t,x,m)\big\},\] where \(B_\varepsilon\) stands for a neighborhood. 

Another source of the FPK inclusions of the mean field type is
a controlled McKean–Vlasov equation with control constraints depending on time, the phase variable and the distribution. More precisely, the problem is to prove an existence theorem for the controlled SDE
\[dX_t=a(t,X_t,\operatorname{Law}(X_t),u_t)dt+\sigma(t,X_t,\operatorname{Law}(X_t),u_t)dW_t,\, u_t\in U(t,X_t,\operatorname{Law}(X_t)).\] Setting
\[F(t,x,m)\triangleq \Big\{\big(a(t,x,m,u),\sigma(t,x,m,u)\sigma^\top(t,x,m,u)\big):\, u\in U(t,x,m)\Big\},\] we arrive at the FPK inclusion of the mean field type considered in this paper. Notice that under rather general conditions on \(a\) and \(\sigma\), given  a solution of the FPK inclusion constructed for the controlled McKean-Vlasov equation \((m_t)_{t\in [0,T]}\), using the Filippov’s implicit function theorem (see~\cite[Theorem 18.17]{AliprantisBorder2006}) and the  superposition principle \cite{Bogachev2021,Trevisan2016}, one can find a feedback control \([0,T]\times\rd\ni (t,x)\mapsto u(t,x)\in U(t,x,m_t)\) and a stochastic process \((X_t)_{t\in [0,T]}\) such that 
\begin{itemize}
	\item the equation \(dX_t=a(t,X_t,\operatorname{Law}(X_t),u_t)dt+\sigma(t,X_t,\operatorname{Law}(t),u(t,X_t))dW_t\) is satisfies in the weak sense;
	\item \(\operatorname{Law}(X_t)=m_t\).
\end{itemize}

The main results of the paper are as follows. First, we show that, if 
\(F\) is upper semicontinuous, convex-valued, and its values satisfy certain growth conditions, then there exists at least one solution of the mean-field FPK inclusion that meets a given initial condition. Moreover, the set of solutions starting from a compact set is compact. Finally, we consider a control problem with dynamical constraints determined by a FPK inclusion of the mean field type. We show that if the running and terminal costs are lower semicontinuous and satisfy certain growth conditions, then the optimal control problem admits a solution, and the value function is lower semicontinuous.

Now let us give a short literature survey. Our paper combines settings from the theory of differential inclusions and FPK equations. The study of differential inclusions began with the papers by Zaremba \cite{zaremba1936paratingent} and Marchaud \cite{marchaud1938bulletin} in the 1930s. Differential inclusions naturally appear in the study of ODEs with discontinuous right-hand side \cite{filippov1988differential} as well as in control problems \cite{wazewski1962generalisation,filippov1962certain}. The main questions studied within differential inclusion theory are the existence of solutions, their quantity and topological properties, as well as viability in  a given set and the approximation of prescribed trajectories \cite{aubin1984differential,kisielewicz1991differential,tolstonogov2000differential,Aubin:2009}. 

A natural extension of the concept of an ODE is a stochastic differential equation. Considering an SDE with multivalued coefficients leads to the notion of stochastic differential inclusions \cite{kisielewicz2013stochastic, aubin1998viability, aubin2000stochastic, gliklikh2010stochastic}.

Additionally, one may assume that the coefficients of the SDE depend on the current distribution (or law) of the unknown process. This type of equation is often called a McKean–Vlasov SDE (or SDE of mean-field type). We refer to the seminal book~\cite{sznitman1991topics} for an introduction to this field and the link between McKean–Vlasov SDEs and 
\(N\)-particle stochastic systems, known as propagation of chaos. Notice that the study of McKean–Vlasov SDEs particularly involves Wasserstein spaces (see \cite{ambrosio, villani2009optimal} for a detailed exposition of these spaces). The concept of the Wasserstein distance originated in optimal transport theory and also plays a crucial role in this paper.

The distribution of a solution to the SDE satisfies the Fokker–Planck–Kolmogorov equation. At the same time, the FPK equation can be considered independently of its SDE origin (see \cite{Bogachev2015} for existence and uniqueness results). The connection between this equation and stochastic processes is established by the Ambrosio–Figalli–Trevisan superposition principle~\cite{Bogachev2021,Trevisan2016}. We also highlight recent progress concerning nonlinear FPK equations~\cite{barbu2023uniqueness,barbu2023nonlinear,GUILLIN20211,MANITA2015199,manita2014nonlinear}.

As noted above, the FPK inclusion arises in particular from control problems for McKean–Vlasov SDEs (see \cite{pham2017dynamic,bensoussan2013mean,djete2022mckean,carmona2018probabilistic} and the references therein). Furthermore, the PDE approach enables one to view the controlled McKean–Vlasov SDE as a controlled system on the Wasserstein space. A particularly interesting case occurs when the volatility coefficient vanishes, in which case the dynamics reduce to the continuity equation. The corresponding differential inclusions, assuming a continuous vector field, were studied in~\cite{Dapice2026,bonnet2020mean,bonnetweill2025viability,bonnet2021differential,bonnet2022viability,bonnetweill2024caratheodory, bonnetweill2024viability}; the discontinuous case was addressed in~\cite{averboukh2022mean,cavagnari2018superposition}.

The rest of the paper is organized as follows. Section~\ref{sect:prel} introduces the general notation. The main results are formulated in the next section. Section~\ref{sect:gauge} is devoted to the construction of the so called moment gauge function, which is the key tool used to prove the existence result for the FPK inclusion and the compactness of the solution set. In Section~\ref{sect:bounds}, we obtain some a priori bounds for the FPK equation. Based on the results of Sections~\ref{sect:gauge} and~\ref{sect:bounds}, we prove the existence theorem for FPK inclusions in Section~\ref{sect:construct}. The compactness of the solutions to the FPK inclusion starting from a compact set is established in the following section. Finally, Section~\ref{sect:control} contains the analysis of the control problem for the FPK inclusion.

\section{Preliminaries}\label{sect:prel}
\begin{itemize}
	\item Let \(T>0\), and let \(\Lebesgue\) denote the Lebesgue measure on \([0,T]\). 
	\item If \(n\) is a natural number, \(X_1,\ldots,X_n\) are some sets, \(i_1,\ldots,i_k\) are indexes in \(\{1,\ldots,n\}\), then \(p^{i_1,\ldots,i_k}\) stands for the projection operator from \(X_1\times\ldots\times X_n\) onto \(X_{i_1}\times\ldots\times X_{i_k}\) defined by the rule: \(\operatorname{p}^{i_1,\ldots,i_k}(x_1,\ldots,x_n)\triangleq (x_{i_1},\ldots,x_{i_k})\).
	\item Given two measurable spaces \((\Omega,\mathcal{F})\), \((\Omega',\mathcal{F}')\), a measure \(m\) on \(\mathcal{F}\) and a mapping \(h:\Omega\rightarrow\Omega'\) that is \(\mathcal{F}/\mathcal{F}'\)-measurable, we denote by \(h\sharp m\) a push-forward measure on \(\mathcal{F}'\) defined by the rule: for each \(\Upsilon\in\mathcal{F}'\), 
	\[(h\sharp m)(\Upsilon)\triangleq m(h^{-1}(\Upsilon)).\] 
	\item Given a Polish space \((X,\rho_X)\), a closed set \(\Upsilon\subset X\) and a point \(x\in X\), we denote by \(\operatorname{dist}(x,\Upsilon)\) the distance between \(x\) and \(\Upsilon\): \(\operatorname{dist}(x,\Upsilon)\triangleq \inf\{\rho_X(x,y):\, y\in\Upsilon\}\). Moreover, \(\mathbb{B}_R(x)\) is used for the closed ball of  radius \(R\) centered at the point \(x\). If \(X\) is a Banach space, and \(x\) is the origin, we omit the argument.
	\item If \((X,\rho_X)\) is a Polish space, then \(\mathcal{M}(X)\) is used for the space of Borel finite nonnegative measures on \(X\). The space \(\mathcal{M}(X)\) is endowed with the topology of narrow convergence. In what follows, we consider only Borel measures. If \(m\in\mathcal{M}(X)\), then \(\operatorname{supp}(m)\) denotes the support of \(m\), i.e., the minimal closed set such that \(m\) gives zero weight to its complement. Furthermore, \(\mathcal{P}(X)\) stands for the space of  probability measures, i.e., 
	\[\mathcal{P}(X)\triangleq \big\{m\in\mathcal{M}(X):\, m(X)=1\big\}.\] The set \(\mathcal{P}(X)\) inherits the topology of the narrow convergence and is closed in this topology. 
	\item If \(X,Y\) are some Polish spaces, \(\nu\) is a measure on \(Y\),  while \((m_y)_{y\in Y}\) is a weakly measurable family of probability measures on \(X\), then one can define \(\mu\triangleq \nu\star(m_t)_{t\in [s,r]}\in\mathcal{M}(Y\times X)\) by the rule: for every \(\varphi\in C^0_b(Y\times X)\), 
	\begin{equation}\label{prel:intro:integral}\int_{Y\times X}\phi(y,x)\mu(d(y,x))\triangleq \int_{Y}\int_X\varphi(y,x)m_t(dx)\,\nu(dy).\end{equation} Conversely, the disintegration theorem \cite[Theorem 5.3.1]{ambrosio} implies that, given \(\mu\in\mathcal{M}(Y\times X)\) such that \(\operatorname{p}^1\sharp\mu=\nu\), one can construct a weakly measurable family of probability measures \((m_y)_{y\in Y}\) on \(X\) such that~\eqref{prel:intro:integral} holds. This family will be  denoted by \((\mu(\cdot|y))_{y\in Y}\). It is defined in the unique way up to the set of \(\nu\)-measure zero.
	\item The set of all Borel probability measures on \(X\) with finite second moment is denoted by \(\mathscr{P}^2(X)\), i.e., \(\mathscr{P}^2(X)\) consists of all measures \(m\in\mathcal{P}(X)\) such that, for some \(x_*\in X\),
	\[\int_X(\rho_X(x,x_*))^2m(dx)<\infty.\]
	\item The space \(\mathscr{P}^2(X)\) is endowed with the second Wasserstein metric defined by
	\[W_2(m_1,m_2)\triangleq \Bigg[\inf\bigg\{\int_{X\times X}(\rho_X(x_1,x_2))^2\pi(d(x_1,x_2)):\, \pi\in\Pi(m_1,m_2)\bigg\}\Bigg]^{1/2},\] where \(\Pi(m_1,m_2)\) stands for the space of couplings between \(m_1\) and \(m_2\), i.e., the set of all Borel probability measures \(\pi\) on \(X\times X\) such that, for every Borel set \(\Upsilon\subset X\), \(\pi(\Upsilon\times X)=m_1(\Upsilon)\) and \(\pi(X\times\Upsilon)=m_2(\Upsilon)\). 
	
	It is well known \cite[Remark 7.1.7]{ambrosio} that \((\prd,W_2)\) is a Polish space.
	\item We assume that \(\rd\) consists of column-vectors, while \(\rds\) is the set of row-vectors. The norms on \(\rd\) and \(\rds\) are denoted by \(|\cdot|\). If \(x\in\rd\) and \(p\in\rds\), then \(px\) denotes their product. We use \(\Rdd\) to denote the space of \(d\times d\) real matrices. The symbol \({}^\top\) denotes transposition. Moreover, \(\Sd\) stands for the set of all symmetric nonnegative matrices.
	\item \(C_b^k(\rd)\) denotes the space of \(k\)-times differentiable functions from \(\rd\) to \(\mathbb{R}\) whose derivatives up to order \(k\) are bounded. 
	\item If \(s,r\) are real numbers with \(s<r\), then \(C^{1,2}_c((s,r)\times\rd)\) denotes the set of compactly supported real valued functions defined on \((s,r)\times\rd\)  that are continuously differentiable in time and twice continuously differentiable in the phase variable.
	\item If a real-valued function \(\phi\) that is defined on some neighborhood of a point \(y\in\rd\) is differentiable at this point, then \(\nabla\phi(y)\) denotes its derivative. We assume that \(\nabla\phi(y)\) is a row-vector. If \(\phi\) is twice differentiable at \(y\), then \(\nabla^\top\nabla\phi(y)\) denotes the Hessian at this point; \(\nabla^\top\nabla\phi(y)\in \mathbb{R}^{d\times d}\).
\end{itemize}

\section{Main results}\label{sect:main}
As we mentioned in the Introduction, we study the Fokker-Planck-Kolmogorov inclusion of the mean field type, defined as follows.

We assume that  a multivalued mapping 
\(F:[0,T]\times\rd\times\prd\rightrightarrows \rd\times\Sd\) is given. 
It generates the multivalued mapping \(V_F\) that assigns to each 
\((t,m)\in [0,T]\times\prd\) the set of generators (i.e., operators 
from \(C_b^2(\rd)\) to \(\mathbb{R}\)) \(L\) such that 
\(L\in V_F(t,m)\) if, for every \(\phi\in C_b^2(\rd)\), there exists 
\((b,A):\rd\rightarrow\rd\times\Sd\) satisfying 
\[
L\phi(x)=\nabla\phi(x)b(x)+\frac{1}{2}\operatorname{tr}(\nabla^\top\nabla\phi(x)A(x)) 
\]
and
\[
(b(x),A(x))\in F(t,x,m)\quad\text{for } m\text{-a.e. }x\in\rd.
\]
We consider the following FPK inclusion of the mean field type
\begin{equation}\label{main:incl:FPK}
	\partial_t m_t+(V_F(t,m_t))^*m_t\ni 0,
\end{equation}
endowed with the initial condition 
\begin{equation*}\label{main:inicond}
	m_0=\nu.
\end{equation*}
\begin{definition}\label{main:def:solution} We say that a flow of probabilities \((m_t)_{t\in [0,T]}\) is a solution of~\eqref{main:incl:FPK} if there exist Borel functions \(b:[0,T]\times\rd\rightarrow\rd\) and \(A:[0,T]\times\rd\rightarrow\Sd\) such that 
	\begin{itemize}
		\item \((b(t,x),A(t,x))\in F(t,x,m_t)\) for \(m_t\)-a.e. \(x\in\rd\) and a.e. \(t\in [0,T]\);
		\item for every \(\varphi\in C^{1,2}_c((0,T)\times\rd)\) we have 
		\[\int_0^T\int_{\rd}\Big[\partial_t\varphi(t,x)+\nabla\varphi(t,x)b(t,x)+\frac{1}{2}\operatorname{tr}(\nabla^\top\nabla\varphi(t,x)\cdot A(t,x))\Big]m_t(dx)\,dt=0.\]
	\end{itemize}
\end{definition} 
Below, we use the following notation:
\begin{itemize}
	\item For \(\nu\in\prd\), \(\mathcal{S}_F(\nu)\) denotes the set of flows of probabilities \((m_t)_{t\in [0,T]}\) solving~\eqref{main:incl:FPK} and satisfying the initial condition \(m_0=\nu\).
	\item \(\mathcal{E}_F(\nu)\) stands for the set of triples \(((m_t)_{t\in [0,T]},b,A)\) satisfying the conditions of Definition~\ref{main:def:solution} and such that \(m_0=\nu\).
	\item If \(\mathcal{K}\subset \prd\), then \(\mathcal{S}_F(\mathcal{K})\) denotes the set of all flows of probabilities solving~\eqref{main:incl:FPK} and starting from a point in \(\mathcal{K}\), i.e., 
	\[\mathcal{S}_F(\mathcal{K})=\bigcup_{\nu\in\mathcal{K}}\mathcal{S}_F(\nu).\]
\end{itemize} Obviously, \(\mathcal{S}_F(\nu)\) is the projection of \(\mathcal{E}_F(\nu)\), meaning that, if \(((m_t)_{t\in [0,T]},b,A)\in\mathcal{E}_F(\nu)\), then \((m_t)_{t\in [0,T]}\in\mathcal{S}_F(\nu)\).

We study the FPK inclusion under the following hypotheses.
\begin{hypothesis}\label{main:hyp:F_values} The multivalued mapping \(F\) is upper semicontinuous and has nonempty convex values. Moreover, there exist nonnegative constants \(C_b\) and \(C_A\) such that for every \((\beta,\Sigma)\in F(t,x,m)\) 
	\[|\beta|\leq C_b(1+|x|+\mathcal{M}_2(m)),\ \ \|\Sigma\|_F\leq C_A(1+|x|^2+\mathcal{M}_2^2(m)).\] Here \(\mathcal{M}_2(m)\) denotes the second moment of the measure \(m\), i.e., 
	\[\mathcal{M}_2(m)\triangleq \Bigg[\int_{\rd}|x|^2m(dx)\Bigg]^{1/2},\] and \(\|\cdot\|_F\) stands for the Frobenius norm. 
\end{hypothesis} 

We now formulate our main results on the existence and compactness of the solution set; they are proved in Sections~\ref{sect:construct} and~\ref{sect:limit}, respectively.
\begin{theorem}\label{main:th:existence} For every \(\nu\in\prd\), \(\mathcal{S}_F(\nu)\) is a nonempty subset of \(C([0,T];\prd)\).
\end{theorem}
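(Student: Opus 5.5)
The plan is to build approximate solutions by an explicit time-discretization (Euler-type) scheme, extract a convergent subsequence via compactness in \(C([0,T];\prd)\), and pass to the limit in the inclusion using convexity and upper semicontinuity of \(F\).

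\emph{Approximations.} Fix \(\nu\in\prd\) and a partition \(t_k=kT/n\). On \([t_k,t_{k+1})\) we freeze the measure at \(m^n_{t_k}\). We pick a Borel selection \((\beta_k(x),\Sigma_k(x))\in F(t_k,x,m^n_{t_k})\); one exists because \(F\) is u.s.c. with closed values, so we may apply the Kuratowski--Ryll-Nardzewski theorem. We then solve the linear FPK equation with these coefficients. The coefficients are merely Borel, so solving it directly is delicate. Instead we construct \(m^n_t\) for \(t\in[t_k,t_{k+1}]\) as the law of \(X_{t_k}+\beta_k(X_{t_k})(t-t_k)+\Sigma_k^{1/2}(X_{t_k})(W_t-W_{t_k})\). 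This is a frozen-coefficient Euler scheme, whose flow solves the FPK equation with coefficients
\[
b^n(t,x)=\mathbb{E}\bigl[\beta_k(X_{t_k})\mid X_t=x\bigr],\qquad A^n(t,x)=\mathbb{E}\bigl[\Sigma_k(X_{t_k})\mid X_t=x\bigr].
\]
The growth bounds of Hypothesis~\ref{main:hyp:F_values} and Gronwall give uniform bounds on \(\sup_t\mathcal{M}_2(m^n_t)\). They also give the equicontinuity estimate \(W_2(m^n_s,m^n_t)\le C|t-s|^{1/2}\). Together with a uniform bound on a higher-than-quadratic moment gauge, which the paper's moment gauge construction (Section~\ref{sect:gauge}) supplies and which needs only \(\nu\in\prd\), this yields relative compactness in \(C([0,T];\prd)\) by Arzel\`a--Ascoli. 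We extract \(m^n\to m\).

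\emph{Limit passage.} We encode the coefficients as measures on \([0,T]\times\rd\times\rd\times\Sd\), namely \(\eta^n\) equal to the joint law of \((t,X_t,\beta_k(X_{t_k}),\Sigma_k(X_{t_k}))\). The growth bounds and uniform integrability from the gauge function give tightness. In the limit \(\eta\), the marginal on \((t,x)\) is \(\Lebesgue\star m_t\). The weak FPK identity passes to the limit because the test integrands are continuous with linear and quadratic growth, which is controlled by uniform integrability. We then define \(b(t,x)\) and \(A(t,x)\) as barycenters of the disintegration \(\eta(\cdot\mid t,x)\). Linearity of the FPK equation in \((b,A)\) shows that \((m,b,A)\) satisfies the weak equation.

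\emph{Main obstacle.} The key step is showing \((b(t,x),A(t,x))\in F(t,x,m_t)\). For this we prove that \(\eta\) is concentrated on the graph \(\{(t,x,\beta,\Sigma):(\beta,\Sigma)\in F(t,x,m_t)\}\); convexity and closedness of the values then put the barycenter in \(F\). Concentration follows from u.s.c. of \(F\). Since \(|X_t-X_{t_k}|\to0\) in probability and \(W_2(m^n_{t_k},m_t)\to0\) uniformly, for each \(\varepsilon\) the measures \(\eta^n\) eventually charge the closed \(\varepsilon\)-enlargement of the graph with mass close to one. We expect the most care here, because u.s.c. gives only non-uniform enlargements. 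We would handle this by working on compacts \((t,x)\in[0,T]\times\mathbb{B}_R\) and using the Portmanteau theorem on the closed sets \(\{\operatorname{dist}((\beta,\Sigma),F(t,x,m_t))\le\varepsilon\}\), after arguing that these sets are closed, or replacing them by closures. Continuity of the limit flow in \(W_2\) is already part of the Arzel\`a--Ascoli step, so \(\mathcal{S}_F(\nu)\subset C([0,T];\prd)\) is nonempty.
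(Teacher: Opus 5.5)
Your route is genuinely different from the paper's and can be completed. However, the concentration step must follow your first formulation, not the fibrewise sets in your last sentence, and two further points are missing.

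\textbf{Comparison with the paper.} The paper freezes only $(t,m)$ at the grid point and keeps the selection at the current position, $(b^n_i(x),A^n_i(x))\in F(t^n_i,x,m^n_{t^n_i})$. It then solves a linear FPK equation on each step. This requires an existence theorem for Borel coefficients (hence the viscosity $n^{-1}\operatorname{I}_d$ and \cite[Theorem 6.6.2]{Bogachev2015}) and the superposition principle for the H\"older bound. In return, the coefficients satisfy \ref{bound:cond_L:Borel}--\ref{bound:cond_L:A} pointwise. So Lemmas~\ref{bound:lm:growth} and~\ref{bound:lm:lip} apply verbatim, the FPK identity localizes to the compact set $Z_R$, and the limit inclusion only needs the enlargements $F^{[l]}$ in $(t,m)$.

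Your explicit Euler--Maruyama scheme with the relaxed measure $\eta^n$ makes the approximations trivial to construct. It also gives second-moment and $W_2$-H\"older bounds directly from the coupling. The price is that the coefficient used at time $t$ sits over $X_t$ but belongs to $F(t_k,X_{t_k},m^n_{t_k})$. Hence the FPK identity must be passed to the limit by uniform integrability, as you say. Both the gauge bound and the limit inclusion also need the extra care described below.

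\textbf{The limit inclusion.} Take the enlargement in the product space. Since $F$ is u.s.c.\ with closed values, its graph $\mathrm{Gr}\,F\subset[0,T]\times\mathbb{R}^d\times\prd\times\mathbb{R}^d\times\Sd$ is closed. Therefore $\Gamma_\varepsilon\triangleq\{(t,x,\beta,\Sigma):\operatorname{dist}((t,x,m_t,\beta,\Sigma),\mathrm{Gr}\,F)\le\varepsilon\}$ is closed, because $t\mapsto m_t$ is continuous. Write $\theta^n(t)$ for the grid point below $t$; then $(\theta^n(t),X_{\theta^n(t)},m^n_{\theta^n(t)},\beta,\Sigma)\in\mathrm{Gr}\,F$ a.s. Hence the $\eta^n$-mass of the complement of $\Gamma_\varepsilon$ is at most $\int_0^T\mathbb{P}\big(|t-\theta^n(t)|+|X_t-X_{\theta^n(t)}|+W_2(m^n_{\theta^n(t)},m_t)>\varepsilon\big)\,dt$, which tends to $0$. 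Portmanteau and $\varepsilon\downarrow0$ then show that $\eta$ is concentrated on $\{(\beta,\Sigma)\in F(t,x,m_t)\}$. No uniformity or compactness argument is needed.

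The fibrewise sets $\{\operatorname{dist}((\beta,\Sigma),F(t,x,m_t))\le\varepsilon\}$ do not work for your scheme, whether or not they are closed. Take $d=1$, $F(t,x,m)=\{(-\operatorname{sgn}x,0)\}$ for $x\neq0$, $F(t,0,m)=[-1,1]\times\{0\}$, $\nu=\delta_0$, and a selection with $\beta(0)=1$. Your scheme oscillates in $[0,T/n]$ and, for half of the time, uses $\beta=1$ while $X_t>0$. So $\eta^n$ gives these sets only about half its mass for every $n$, even though the limit ($m_t\equiv\delta_0$, $b=0$) is a solution.

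\textbf{Two smaller points.} First, Lemma~\ref{bound:lm:growth} cannot be quoted for your flows. The Markovian coefficients $\mathbb{E}[\beta_k(X_{t_k})\mid X_t=x]$ need not satisfy $|b^n(t,x)|\le C\mathscr{a}(x)$, because $X_{t_k}$ can be far from $X_t$. Instead, propagate $\mathbb{E}V_g(X_t)$ directly by It\^o's formula, using $|\nabla V_g(x)|\le 3g(\mathscr{a}(x))\mathscr{a}(x)$ and $\operatorname{tr}(\nabla^\top\nabla V_g(x)\Sigma)\le Cg(\mathscr{a}(x))\|\Sigma\|_F$. Since $g$ is increasing, $g(\mathscr{a}(x))\big(\mathscr{a}(x)\mathscr{a}(y)+\mathscr{a}^2(y)\big)\le 2\big(V_g(x)+V_g(y)\big)$, and Gronwall then applies step by step.

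Second, the theorem asserts that \emph{every} element of $\mathcal{S}_F(\nu)$ lies in $C([0,T];\prd)$. Continuity of your limit flow shows this for one element only. For an arbitrary solution, bound $\mathcal{M}_2(m_t)$ via the growth condition and Gronwall, as in Lemma~\ref{limit:lm:bound}. Its coefficients then satisfy \ref{bound:cond_L:Borel}--\ref{bound:cond_L:A}, and Lemma~\ref{bound:lm:lip} gives $W_2$-continuity.
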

\begin{theorem}\label{main:th:compact} If \(\mathcal{K}\subset\prd\) is compact, then \(\mathcal{S}_F(\mathcal{K})\) is compact in \(C([0,T];\prd)\).
\end{theorem}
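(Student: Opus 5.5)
The plan is to show that \(\mathcal{S}_F(\mathcal{K})\) is relatively compact and closed in \(C([0,T];\prd)\), working with the triples in \(\mathcal{E}_F\). The limit passage is done on the space–time measures \(\Lebesgue\star(m_t)\) paired with the coefficients, and convexity plus upper semicontinuity of \(F\) recover the inclusion.

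\textbf{Uniform a priori bounds.} Compactness of \(\mathcal{K}\) in \(\prd\) gives uniform integrability of \(|x|^2\) over \(\mathcal{K}\). By de la Vallée-Poussin, there is a superquadratic function \(g\) (the moment gauge function of Section~\ref{sect:gauge}) with \(\sup_{\nu\in\mathcal{K}}\int g(|x|)\,\nu(dx)<\infty\). I would choose \(g\) so that it is \(C^2\) and \(|\nabla g|(1+|x|)+\|\nabla^\top\nabla g\|(1+|x|^2)\le C(1+g)\).

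Testing the FPK equation with \(g\) (after truncation) and using the growth bounds of Hypothesis~\ref{main:hyp:F_values}, a Gronwall argument of Section~\ref{sect:bounds} type gives a uniform bound on \(\mathcal{M}_2(m_t)\). It also gives \(\sup_{t}\int g\,dm_t\le C\) uniformly over \(\mathcal{S}_F(\mathcal{K})\). Hence all \(m_t\) lie in a fixed compact subset of \(\prd\).

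\textbf{Equicontinuity.} For smooth bounded test functions, \(|\int\phi\,dm_t-\int\phi\,dm_s|\le C|t-s|\) by the linear and quadratic growth of \(b\) and \(A\). Together with uniform integrability of \(|x|^2\), this yields a uniform modulus of continuity of \(t\mapsto m_t\) in \(W_2\). An alternative route is via the superposition principle, which gives \(E|X_t-X_s|^2\le C|t-s|\). Arzelà–Ascoli then gives relative compactness.

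\textbf{Closedness.} This is the main obstacle. Take \(((m^n),b^n,A^n)\in\mathcal{E}_F(\nu_n)\) with \(m^n\to m\) in \(C([0,T];\prd)\) and \(\nu_n\to\nu\in\mathcal{K}\). Define the measures
\[\mu^n\triangleq(\mathrm{id},b^n,A^n)\sharp(\Lebesgue\star(m^n_t))\quad\text{on }[0,T]\times\rd\times\rd\times\Sd.\]
The growth bounds and the uniform moment bounds make \((\mu^n)\) tight. Moreover \(|\beta|\) and \(\|\Sigma\|_F\) are uniformly integrable with respect to \(\mu^n\), since they are dominated by \(1+|x|+\mathcal{M}_2\) and \(1+|x|^2+\mathcal{M}_2^2\) and \(g\) is superquadratic. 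So along a subsequence \(\mu^n\to\mu\) narrowly, and we may pass to the limit in the weak FPK equation. The limit equation has the barycentric coefficients
\[b(t,x)=\int\beta\,\mu(d\beta\,d\Sigma\mid t,x),\qquad A(t,x)=\int\Sigma\,\mu(d\beta\,d\Sigma\mid t,x),\]
and its first marginal is \(\Lebesgue\star(m_t)\).

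It remains to check \((b,A)(t,x)\in F(t,x,m_t)\). Fix \(\varepsilon>0\). Upper semicontinuity of \(F\) together with \(m^n_t\to m_t\) uniformly implies that \(\operatorname{supp}\mu\) lies in the closed graph of \((t,x)\mapsto F(t,x,m_t)\). To see this, use that the set \(\{(t,x,\beta,\Sigma):\operatorname{dist}((\beta,\Sigma),F(t,x,m^n_t))\le\varepsilon\}\) eventually contains the graph for large \(n\), localized on compacts. Then use Portmanteau on closed \(\varepsilon\)-neighbourhoods.

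Hence \(\mu(\cdot\mid t,x)\) is concentrated on the closed convex set \(F(t,x,m_t)\) for a.e. \((t,x)\), and so its barycenter belongs to it. Finally, \(m_0=\nu\in\mathcal{K}\), so the limit lies in \(\mathcal{S}_F(\mathcal{K})\), proving closedness and hence compactness. The delicate points are measurability of the barycenters and the use of closed values, which upper semicontinuity and the growth bounds supply (closedness of values can be assumed or obtained by convexification of closures).
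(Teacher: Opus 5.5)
Your architecture is the paper's. The measures \((\mathrm{id},b^n,A^n)\sharp(\Lebesgue\star(m^n_t))\) are its relaxed control processes \(\mathscr{i}[(m^n_t),b^n,A^n]\). The moment bounds, compact containment, equicontinuity, tightness, limit passage and barycentric coefficients are Lemmas~\ref{limit:lm:bound}--\ref{limit:lm:chi_lim} together with the map \(\mathscr{j}\); your test-function route to equicontinuity is a valid substitute for the superposition principle.

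The gap is in the step you flag as the main obstacle. The containment you invoke to place \(\operatorname{supp}\mu\) in the graph \(\Gamma\) of \((t,x)\mapsto F(t,x,m_t)\) is false under either reading. Let \(\Gamma^n\) be the graph of \((t,x)\mapsto F(t,x,m^n_t)\), on which \(\mu^n\) is concentrated.

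Read literally, your claim says that \(\Gamma\cap K\) eventually lies in the fibrewise \(\varepsilon\)-neighbourhood of \(\Gamma^n\), i.e. \(F(t,x,m_t)\subset F(t,x,m^n_t)+\varepsilon\mathbb{B}_1\). That is a lower semicontinuity property, which USC does not provide. It would also not locate \(\mu\), since it says nothing about where \(\mu^n\) lives.

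The correct direction also fails, even \(\mu^n\)-a.e.: \(\Gamma^n\cap K\) need not lie inside the fibrewise neighbourhood \(\{\operatorname{dist}((\beta,\Sigma),F(t,x,m_t))\le\varepsilon\}\) (distance taken at the same \((t,x)\)). Take \(d=1\), \(F(t,x,m)=[0,1]\times\{0\}\) if \(x=\int y\,m(dy)\), and \(F(t,x,m)=\{(0,0)\}\) otherwise. This \(F\) is USC with compact convex values and satisfies Hypothesis~\ref{main:hyp:F_values}. The flows \(m^n_t=\delta_{t+1/n}\) with \(b^n\equiv 1\), \(A^n\equiv 0\) solve the inclusion and converge in \(C([0,T];\prd)\) to \(m_t=\delta_t\). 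Yet \(\mu^n\) is carried by the points \((t,t+1/n,1,0)\), which are at distance \(1\) from \(F(t,t+1/n,\delta_t)=\{(0,0)\}\).

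What USC does give is the version with neighbourhoods taken in all four variables. Suppose \((t_j,x_j,\beta_j,\Sigma_j)\in\Gamma^{n_j}\) converge to \((t,x,\beta,\Sigma)\). Uniform convergence and continuity of \(t\mapsto m_t\) give \(m^{n_j}_{t_j}\to m_t\), so the closed graph of \(F\) yields \((\beta,\Sigma)\in F(t,x,m_t)\). Arguing by contradiction and extracting subsequences, for each compact \(K\) and \(\varepsilon>0\), \(\Gamma^n\cap K\) lies for large \(n\) in the \(\varepsilon\)-neighbourhood of \(\Gamma\) in the metric of \([0,T]\times\rd\times\rd\times\Sd\). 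In the example, \((t,t+1/n,1,0)\) is \(1/n\)-close to \((t,t,1,0)\in\Gamma\). Portmanteau on the open sets \(\operatorname{int}K\cap\{\operatorname{dist}(\cdot,\Gamma)>\varepsilon\}\) then gives \(\mu(\Gamma^c)=0\), and your barycenter argument finishes.

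The paper handles this differently. It enlarges \(F\) only in the measure variable, \(\widetilde F^{[l]}(t,x,m)=\overline{\operatorname{co}}\bigcup_{W_2(m',m)\le 2^{-l}}F(t,x,m')\), so that \(F(t,x,m^{(k)}_t)\subset\widetilde F^{[l]}(t,x,m_t)\) holds trivially at each fixed \((t,x)\). It then passes to the limit using lower semicontinuity of the distance to \(\widetilde F^{[l]}\), and intersects over \(l\). Your repaired route needs only the closed graph of \(F\) and convexity of the fibres.

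Finally, closed values must be assumed; the paper tacitly does so when writing \(F=\bigcap_l F^{[l]}\). They cannot be obtained by taking closures, because that enlarges \(\mathcal{S}_F\). Without them the theorem fails: for \(d=1\) and \(F\equiv(0,1]\times\{0\}\), the solutions \(\delta_{t/n}\) converge to the static flow \(\delta_0\). That flow is not a solution, since it forces \(b(t,0)=0\) for a.e. \(t\).
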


The last part of the main results concerns the following control problem for FPK inclusions of the mean field type:
\begin{equation}\label{main:payoff:intro}
	\begin{split}
		\text{minimize } J[(m_t)_{t\in [0,T]}&,b,A]\\ \triangleq G(&m_T)+\int_0^T\int_{\rd} f(t,x,m_t,b(t,x),A(t,x))m_t(dx)\,dt\end{split}
\end{equation} over the set \(\mathcal{E}_F(\nu)\).

We define
\[\begin{split}
	\operatorname{Val}(\nu)\triangleq \Bigg\{J[(&m_t)_{t\in [0,T]},b,A]:\ \ ((m_t)_{t\in [0,T]},b,A)\in\mathcal{E}_F(\nu)\Bigg\}.\end{split}\]

The main result in this part is the following.
\begin{theorem}\label{main:th:control} Assume that \(F\) satisfies Hypothesis~\ref{main:hyp:F_values}. Additionally, suppose that 
	\begin{itemize}
		\item \(G\) and \(f\) are lower semicontinuous;
		\item \(G\) is bounded below on every compact subset of \(\prd\); 
		\item \(f\) is convex with respect to \(\beta\) and \(\Sigma\);
		\item \(f\) satisfies the growth condition \[|f(t,x,m,\beta,\Sigma)|\leq C_f(m)(1+|x|^2+|\beta|^2+\|\Sigma\|_F),\] where \(C_f(m)\) is bounded on each compact subset of \(\prd\).
	\end{itemize}	
	Then, there exists an admissible control process \(((m_t^0)_{t\in [0,T]},b^0,A^0)\) such that 
	\[\operatorname{Val}(\nu)=J[(m_t^0)_{t\in [0,T]},b^0,A^0].\] Moreover, the function \(\operatorname{Val}\) is lower semicontinuous. 
\end{theorem}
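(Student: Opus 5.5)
The argument is the direct method of the calculus of variations. The key device is to encode each admissible triple \(((m_t)_{t\in[0,T]},b,A)\in\mathcal{E}_F(\nu)\) by the measure
\[
\mu\triangleq \bigl(\Lebesgue\star(m_t)_{t\in[0,T]}\bigr)\circ(\mathrm{id},b,A)^{-1}
\]
on \([0,T]\times\rd\times\rd\times\Sd\). In other words, \(\mu\) is the push-forward of \(dt\,m_t(dx)\) under \((t,x)\mapsto(t,x,b(t,x),A(t,x))\).

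\textbf{Step 1: finiteness of the value and a minimizing sequence.} By Theorem~\ref{main:th:existence}, \(\mathcal{E}_F(\nu)\) is nonempty. By Theorem~\ref{main:th:compact} with \(\mathcal{K}=\{\nu\}\), the set \(\mathcal{S}_F(\nu)\) is compact in \(C([0,T];\prd)\). Consequently \(\sup_t\mathcal{M}_2(m_t)\le R\) uniformly over \(\mathcal{S}_F(\nu)\), and the set \(\{m_t\}\) lies in a compact subset of \(\prd\). Hypothesis~\ref{main:hyp:F_values} then bounds \(|b|^2+\|A\|_F\) by \(C(1+|x|^2)\). With the growth condition on \(f\) (where \(C_f\) is bounded on that compact set) and the lower bound on \(G\) over compacts, this shows that \(\operatorname{Val}(\nu)\) is finite. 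Take a minimizing sequence \(((m^n_t),b^n,A^n)\), pass to a subsequence with \(m^n\to m^0\) in \(C([0,T];\prd)\), and form the measures \(\mu^n\) as above.

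\textbf{Step 2: compactness and identification of the limit.} The second moments \(\int(|x|^2+|\beta|^2+\|\Sigma\|_F)\,d\mu^n\) are uniformly bounded, and they are uniformly integrable because \(m^n_t\) ranges over a compact subset of \(\prd\). Hence \(\mu^n\to\mu\) narrowly along a subsequence, with convergence of integrals of functions of growth \(|x|^2+|\beta|^2+\|\Sigma\|_F\). Disintegrate \(\mu\) with respect to \((t,x)\), whose marginal is \(dt\,m^0_t(dx)\), and set
\[
(b^0,A^0)(t,x)\triangleq\int(\beta,\Sigma)\,\mu(d(\beta,\Sigma)|t,x).
\]

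\begin{itemize}
\item \emph{The equation holds.} Passing to the limit in the weak FPK equation is possible because the integrand is linear in \((\beta,\Sigma)\) with compactly supported test functions, so the limit satisfies it with \((b^0,A^0)\).
\item \emph{The constraint holds.} This is the step I expect to be the main obstacle, since \(F\) is only upper semicontinuous. The tool is the same closure argument used for Theorem~\ref{main:th:compact}. First, the support of \(\mu(\cdot|t,x)\) lies in \(F(t,x,m^0_t)\). To see this, for \(\varepsilon>0\) the graph condition \(\operatorname{dist}((\beta,\Sigma),F(t,x,m^n_t))=0\) \(\mu^n\)-a.e., together with upper semicontinuity and \(m^n_t\to m^0_t\) uniformly, gives \(\mu\bigl(\{\operatorname{dist}((\beta,\Sigma),F(t,x,m^0_t))>\varepsilon\}\bigr)=0\); one uses an open-set/Portmanteau argument on the upper semicontinuous neighborhoods. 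Then convexity of \(F(t,x,m^0_t)\) places the barycenter \((b^0,A^0)\) in \(F\).
\end{itemize}

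Thus \(((m^0_t),b^0,A^0)\in\mathcal{E}_F(\nu)\).

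\textbf{Step 3: lower semicontinuity of the cost.} We have \(G(m^n_T)\to\) a value at least \(G(m^0_T)\) in the liminf sense, by lower semicontinuity of \(G\). For the running cost, write it as \(\int f(t,x,m^n_t,\beta,\Sigma)\,\mu^n\). The integrand \(f\) is lower semicontinuous and bounded below by \(-C(1+|x|^2+|\beta|^2+\|\Sigma\|_F)\), which is uniformly integrable. Handling the dependence on \(m^n_t\) by the uniform convergence in \(t\) (for instance by lifting to measures on \([0,T]\times\rd\times\prd\times\rd\times\Sd\) via \(t\mapsto m^n_t\)), the standard lower semicontinuity theorem for narrow convergence gives
\[
\liminf_n\int f\,d\mu^n\ \ge\ \int f(t,x,m^0_t,\beta,\Sigma)\,\mu(d(t,x,\beta,\Sigma)).
\]
Jensen's inequality, using convexity of \(f\) in \((\beta,\Sigma)\) applied to the disintegration, bounds the right-hand side below by \(\int_0^T\int f(t,x,m^0_t,b^0,A^0)\,m^0_t(dx)\,dt\). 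Hence \(J[m^0,b^0,A^0]\le\operatorname{Val}(\nu)\), so the infimum is attained.

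\textbf{Step 4: lower semicontinuity of \(\operatorname{Val}\).} Given \(\nu_k\to\nu\), the set \(\mathcal{K}=\{\nu_k\}\cup\{\nu\}\) is compact. Pick optimal triples for each \(\nu_k\), which exist by Step 3. Theorem~\ref{main:th:compact} applied to \(\mathcal{K}\), together with the uniform bounds, lets Steps 2 and 3 be repeated verbatim. This yields a limit triple in \(\mathcal{E}_F(\nu)\) with cost at most \(\liminf_k\operatorname{Val}(\nu_k)\), and therefore \(\operatorname{Val}(\nu)\le\liminf_k\operatorname{Val}(\nu_k)\).
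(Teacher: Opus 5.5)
Your proposal is correct and follows essentially the paper's route. You encode admissible triples as relaxed measures \((\operatorname{Id},b,A)\sharp(\Lebesgue\star(m_t)_{t\in[0,T]})\), extract a narrowly convergent subsequence whose flows converge in \(C([0,T];\prd)\), close the constraint using upper semicontinuity and convexity of \(F\), pass to the liminf in the cost, and recover an admissible triple by barycentric projection and Jensen's inequality; the paper instead minimizes \(I\) over \(\mathbb{X}[\nu]\) and then applies \(\mathscr{j}\), which is equivalent to your direct minimization over \(\mathcal{E}_F(\nu)\). The one technical difference is the \(m^n_t\)-dependence of \(f\): you lift to \([0,T]\times\rd\times\prd\times\rd\times\Sd\) and quote the standard liminf lemma for lower semicontinuous integrands with uniformly integrable negative part, while the paper does this by hand via the truncation \(f\vee(-N)\) and Lipschitz inf-convolutions; the same lifting would also turn your sketchy constraint step into a one-line Portmanteau argument on the closed graph of \(F\).
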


\section{Moment gauge function}\label{sect:gauge}
This section slightly enhances the construction of the moment gauge function proposed in \cite{Averboukh_Kolpakova}. It is used to show that, under certain growth conditions, the solutions of the FPK equation starting from a compact subset of \(\prd\) lie in a compact set. This fact plays a crucial role in the subsequent proofs. 

For \(x\in\rd\), we put 
\begin{equation}\label{gauge:intro:a}
	\mathscr{a}(x)\triangleq \big(1+|x|^2\big)^{1/2}.
\end{equation}
\begin{proposition}\label{gauge:prop:g} Let \(\mathcal{K}\subset \prd\) be compact. Then, there exists a function \(g\in C^2((0,+\infty))\) such that 
	\begin{enumerate}[label=(G\arabic*)]
		\item\label{gauge:cond_g:values} \(g(r)>0\), \(g'(r)>0\), \(g''(r)\leq 0\) on \([1,+\infty)\); 
		\item\label{gauge:cond_g:infinity} \(g(r)\rightarrow \infty\) as \(r\rightarrow \infty\);
		\item\label{gauge:cond_g:derivative} \(g'(r)r\leq g(r)\) on \([1,+\infty)\)
		\item\label{gauge:cond_g:int} \[\sup_{m\in\mathcal{K}}\int_{\rd}g(\mathscr{a}(x))\mathscr{a}^2(x)m(dx)<\infty.\]
	\end{enumerate}
\end{proposition}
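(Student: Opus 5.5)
The plan is a de la Vallée-Poussin type construction. The key input is uniform integrability of second moments on a compact set in \(\prd\). Recall that \(\mathcal{K}\subset\prd\) is compact iff it is tight and has uniformly integrable second moments. Since \(\mathscr{a}^2(x)=1+|x|^2\), the quantity
\[
\eta(R)\triangleq \sup_{m\in\mathcal{K}}\int_{\{\mathscr{a}(x)\geq R\}}\mathscr{a}^2(x)\,m(dx)
\]
is finite, nonincreasing, and tends to \(0\) as \(R\to\infty\). Choose an increasing sequence \(1=R_0<R_1<R_2<\ldots\), \(R_k\to\infty\), such that \(\eta(R_k)\leq 2^{-k}\) for \(k\geq1\).

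The gauge is then built from a piecewise constant derivative. Define a nonincreasing step function \(h\) on \([1,\infty)\) by \(h(r)=c_k\) for \(r\in[R_k,R_{k+1})\), with positive \(c_k\) decreasing to \(0\). Set \(g_0(r)=g(1)+\int_1^r h\). The constraint \ref{gauge:cond_g:derivative} is then essentially automatic: with \(g(1)\geq h(1)\cdot 1\), concavity of \(g_0\) gives \(g_0(r)\geq g_0(1)+ (r-1)g_0'(r)\geq r\,g_0'(r)\). Indeed, \(g_0(r)-r g_0'(r)\) is nondecreasing for concave \(g_0\) (formally its derivative is \(-r g_0''\geq 0\)), so it suffices that \(g_0(1)\geq g_0'(1)\).

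Growth is where the two requirements pull against each other. We want \(g_0(r)\to\infty\), i.e. \(\int_1^\infty h=\infty\), while keeping \(g_0(R_{k+1})\) growing slowly, say \(g_0(R_{k+1})\leq k+1\). This works by taking \(c_k=1/(R_{k+1}-R_k)\) up to normalization. Then \(g_0\) increases by exactly \(1\) on each interval \([R_k,R_{k+1}]\), so \(g_0(R_k)=g_0(1)+k\to\infty\). However, \(c_k\) must be nonincreasing for concavity. To guarantee this, pick the \(R_k\) inductively so that \(R_{k+1}-R_k\geq R_k-R_{k-1}\); enlarging \(R_{k+1}\) only decreases \(\eta\), so this is compatible with the choice above.

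The integral bound \ref{gauge:cond_g:int} then follows by splitting \(\mathbb{R}^d\) into the shells \(\{R_k\leq\mathscr{a}<R_{k+1}\}\). For \(m\in\mathcal{K}\),
\[
\int g_0(\mathscr{a})\mathscr{a}^2\,dm\leq (g_0(1)+1)\sup_{\mathcal{K}}\int\mathscr{a}^2dm+\sum_{k\geq1}(g_0(1)+k+1)2^{-k}<\infty.
\]

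The remaining issue, and the only real technical point, is smoothness: \(g_0\) is merely \(C^1\)-piecewise with a nonincreasing piecewise constant derivative, but we need \(g\in C^2\). Two routes are available.

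\emph{Piecewise linear derivative.} Replace \(h\) by a continuous, piecewise linear, nonincreasing, positive function \(\tilde h\) with \(c_{k+1}\leq\tilde h\leq c_k\) on \([R_k,R_{k+1}]\). This can be arranged by interpolating linearly from \(c_{k-1}\) at \(R_k\) down to \(c_k\) at \(R_{k+1}\). Then \(\int_{R_k}^{R_{k+1}}\tilde h\leq (R_{k+1}-R_k)c_{k-1}\), which stays bounded thanks to the monotone gap condition. The resulting \(g\) is \(C^2\) except at kinks of \(\tilde h'\).

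\emph{Mollification.} Mollify \(\tilde h\) by a one-sided kernel, which preserves both monotonicity and positivity; this gives \(g\) as a genuinely \(C^2\) function.

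Finally, extend \(g\) to \((0,1)\) arbitrarily as a \(C^2\) function, and verify the remaining conditions: \(g>0\), \(g'>0\), \(g''\leq0\), \(g\to\infty\), and \ref{gauge:cond_g:derivative} via the monotonicity of \(g-rg'\) together with \(g(1)\geq g'(1)\).
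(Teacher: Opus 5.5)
Your construction is essentially the paper's. Both choose $R_k$ from uniform integrability of second moments, use a piecewise linear gauge that rises by exactly $1$ on each $[R_k,R_{k+1}]$, take nondecreasing gaps so the slopes $c_k=1/(R_{k+1}-R_k)$ are nonincreasing, sum over shells for (G4), and mollify for smoothness.

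The one genuinely different ingredient is (G3). The paper imposes the extra condition $R_n\ge (2n+2)R_{n-1}/(2n-1)$ to get $(g_1'+\tfrac12)r\le g_1-1$, and then has to control how mollification perturbs $g$ and $g'$. You instead observe that for concave $g$ the map $r\mapsto g(r)-rg'(r)$ is nondecreasing on $[1,\infty)$ (equivalently, $g(r)\ge g(1)+(r-1)g'(r)$ and $g'(r)\le g'(1)$). So (G3) reduces to $g(1)\ge g'(1)$, which the additive constant settles. This removes the extra growth condition on the $R_k$. It also survives any smoothing that preserves concavity, which your one-sided mollification of the derivative does exactly, so it is the cleaner argument.

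There is one slip in the smoothing step. You interpolate $\tilde h$ linearly from $c_{k-1}$ at $R_k$ to $c_k$ at $R_{k+1}$, which gives $\int_{R_k}^{R_{k+1}}\tilde h=\tfrac12\bigl(1+\frac{R_{k+1}-R_k}{R_k-R_{k-1}}\bigr)$. The monotone gap condition bounds this ratio from \emph{below} by $1$, not from above. Since $R_{k+1}$ may have to be enormous to make $\eta(R_{k+1})\le 2^{-k-1}$, the increments of $\tilde g$ on the shells can be unbounded, and the series in (G4) can diverge.

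The sandwich you actually stated, $c_{k+1}\le\tilde h\le c_k$ on $[R_k,R_{k+1}]$, is the right one. Interpolating from $c_k$ at $R_k$ to $c_{k+1}$ at $R_{k+1}$ yields $\tfrac12\le\int_{R_k}^{R_{k+1}}\tilde h\le 1$.

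Alternatively, skip the piecewise linear stage and mollify the step function $h$ directly with a forward kernel, $\hat h(r)=\int_0^1 h(r+s)\rho(s)\,ds$. Then:
\begin{itemize}
\item $\hat h$ is smooth, positive and nonincreasing;
\item $\hat h\le h$, so $g(R_{k+1})\le g(1)+k+1$ as before;
\item $\int_1^R\hat h\ge\int_2^R h\to\infty$;
\item taking $g(1)\ge c_0\ge\hat h(1)$ gives (G3).
\end{itemize}
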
 
\begin{remark} We refer to a function satisfying conditions \ref{gauge:cond_g:values}--\ref{gauge:cond_g:int} as a moment gauge function for the compact set \(\mathcal{K}\).
\end{remark}
\begin{proof}[Proof of Proposition~\ref{gauge:prop:g}] First, we construct an auxiliary function \(g_1\). 
	
	Since \(\mathcal{K}\) is compact in \(\prd\), all measures \(m\in\mathcal{K}\) have uniformly integrable second moments \cite[Proposition 7.1.5]{ambrosio}. Hence, given \(\varepsilon>0\), there exists \(R>0\) such that, for all \(m\in\mathcal{K}\), 
	\[\int_{\mathscr{a}(x)\geq R}\mathscr{a}^2(x)m(dx)<\varepsilon.\] Now we define a sequence \(\{R_n\}_{n=0}^\infty\) as follows. Set \(R_0\triangleq 0\). If \(R_0,\ldots,R_{n-1}\) have already been constructed, choose \(R_n\) such that
	\begin{itemize}
		\item \[\int_{\mathscr{a}(x)\geq R_n}\mathscr{a}^2(x)m(dx)\leq \frac{1}{(n+4)2^n};\]
		\item  
		\(R_n-R_{n-1}\geq 1\vee (R_{n-1}-R_{n-2})\) (here we formally assume \(R_{-1}\triangleq 0\));
		\item \(R_n\geq (2n+2)R_{n-1}(2n-1)^{-1}\).
	\end{itemize} Define the function \(g_1:(-\infty,+\infty)\rightarrow \mathbb{R}\) by the rule: for \(r\in [R_{n-1},R_n]\),
	\[g_1(r)\triangleq \frac{r-R_{n-1}}{R_n-R_{n-1}}+n+2;\] while, on \((-\infty,0)\), \(g(r)\equiv 3\). 
	
	Let us list the properties of \(g_1\).
	\begin{enumerate}[label=(\roman*)]
		\item \(g_1(r)\geq 3\);
		\item \(g_1\) is piecewise differentiable; for \(r\in (R_{n-1},R_n)\),
		\(g'_1(r)=(R_n-R_{n-1})^{-1}\); 
		\item\label{gauge:aux_cond:lim} \(g_1(r)\rightarrow \infty\) as \(r\rightarrow \infty\);
		\item The second distributional derivative of \(g_1\) is equal to \[g''_1(r)=\sum_{n=1}^\infty \big[(R_{n+1}-R_n)^{-1}-(R_{n}-R_{n-1})^{-1}\big]\delta(r-R_n);\] here \(\delta(\cdot)\) denotes the Dirac delta function;
		\item\label{gauge:aux_cond:derivative} \((g_1'(r)+1/2)r\leq g_1(r)-1\). Indeed, we have \((2n-1)R_n\geq 2(n+1)R_{n-1}\). Thus, for each \(r\in [R_{n-1},R_n]\), 
		\[\begin{split}
			\Bigg(g_1'(r)+\frac{1}{2}\Bigg)r&=\frac{3r}{2(R_n-R_{n-1})}\leq \frac{3R_n}{2(R_n-R_{n-1})}\\&\leq n+1\leq \frac{r-R_{n-1}}{R_n-R_{n-1}}+n+1\leq g_1(r)-1.
		\end{split} \]
		\item\label{gauge:aux_cond:int} The quantity \(\int_{\rd}g_1(\mathscr{a}(x))\mathscr{a}^2(x)m(dx)\) is  bounded by \(2+R_1^2\) for each \(m\in\mathcal{K}\). To show this notice that
		\[\begin{split}
			\int_{\rd}g_1(\mathscr{a}(x))\mathscr{a}^2(x&)m(dx)\\=\int_{\mathscr{a}(x)<R_1}&g_1(\mathscr{a}(x))\mathscr{a}^2(x)m(dx)\\&+\sum_{n=1}^\infty\int_{\mathscr{a}(x)\in [R_n,R_{n+1})}g_1(\mathscr{a}(x))\mathscr{a}^2(x)m(dx). \end{split}\] Since \(g_1(r)\leq n+4\) on \([R_n,R_{n+1}]\), we have 
		\[\begin{split}
			\int_{\rd}g_1(\mathscr{a}(x))\mathscr{a}^2(x&)m(dx)\\\leq (1+R_1^2)&+\sum_{n=1}^\infty\int_{\mathscr{a}(x)\in [R_n,R_{n+1})}(n+4)\mathscr{a}^2(x)m(dx) \\ \leq (1+R_1^2)&+\sum_{n=1}^\infty(n+4)\int_{\mathscr{a}(x)\geq  R_n}\mathscr{a}^2(x)m(dx).
		\end{split}\] Recalling that \((n+4)\int_{\mathscr{a}(x)\geq  R_n}\mathscr{a}^2(x)m(dx)\leq 2^{-n}\), we get 
		\[
		\int_{\rd}g_1(\mathscr{a}(x))\mathscr{a}^2(x)m(dx)\leq (2+R_1^2).
		\]
	\end{enumerate} 
	
	To construct the function \(g\), consider a smooth mollifier \(\eta:\mathbb{R}\rightarrow\mathbb{R}\) supported on \([-1,1]\). Define 
	\[g(r)\triangleq (g_1*\eta)(r)=\int_{-1}^{1}g_1(r-r')\eta(r')dr'=\int_{-\infty}^{+\infty}g_1(r')\eta(r-r')dr'.\] Since \(0\leq g'_1(r)\leq 1\), we have \(|g'(r)-g_1'(r)|\leq 1\). Moreover, direct calculations give that, for \(r\in [1,+\infty)\), \(g'(r)>0\) and
	\(|g'(r)- g_1(r)|\leq 1/2\). Similarly, \(g''(r)\leq 0\). Thus property~\ref{gauge:cond_g:values} holds. 
	
	Property~\ref{gauge:cond_g:infinity} follows directly from~\ref{gauge:aux_cond:lim}. 
	
	To show~\ref{gauge:cond_g:derivative}, we use~\ref{gauge:aux_cond:derivative} and the inequalities \(g'(r)\leq g_1'(r)+1/2\), \(g_1(r)-1\leq g(r)\). 
	
	Finally, from~\ref{gauge:aux_cond:int}, we have 
	\[\begin{split}\int_{\rd}g(\mathscr{a}(x))\mathscr{a}^2(x)m(dx)&\leq \int_{\rd}g_1(\mathscr{a}(x))\mathscr{a}^2(x)m(dx)+\int_{\rd}\mathscr{a}^2(x)m(dx)\\&\leq (2+R_1^2)+\int_{\rd}\mathscr{a}^2(x)m(dx).\end{split}\] Since \(\int_{\rd}\mathscr{a}^2(x)m(dx)\) is uniformly bounded on \(\mathcal{K}\), we obtain~\ref{gauge:cond_g:int}.
\end{proof}

\section{FPK equations and a priori bounds}\label{sect:bounds} 
\subsection{Properties of solutions of the FPK equation}
In this section, we consider  the FPK equation on \([s,r]\)
\begin{equation}\label{bound:eq:FPK}\partial_t m_t+L^*_tm_t=0,\end{equation} where 
\[L_t\phi(x)\triangleq \nabla\phi(x)\cdot b(t,x)+\frac{1}{2}\operatorname{tr}(\nabla^\top\nabla \phi(t,x)\cdot A(t,x)),\] while
\(b:[s,r]\times\rd\rightarrow\rd\) and \(A:[s,r]\times\rd\rightarrow\Sd\) satisfy 
\begin{enumerate}[label=(L\arabic*)]
	\item\label{bound:cond_L:Borel} \(b\) and \(A\) are Borel;
	\item\label{bound:cond_L:b} \(|b(t,x)|\leq C^0_b\mathscr{a}(x)\);
	\item\label{bound:cond_L:A} \(A\) is nonnegative and \(\|A(t,x)\|_F\leq C^0_A\mathscr{a}^2(x)\).
\end{enumerate} 
\begin{lemma}\label{bounds:lm:Lyapunov} Let \(b\) and \(A\) satisfy conditions~\ref{bound:cond_L:Borel}--\ref{bound:cond_L:A}. Additionally, suppose that the function \(g\in C^2([1,+\infty))\) satisfies conditions~\ref{gauge:cond_g:values}--\ref{gauge:cond_g:derivative}. Then, for the function \(V_g(x)\triangleq g(\mathscr{a}(x))\mathscr{a}^2(x)\), the following inequality holds true: 
	\[L_tV_g(x)\leq C_1 V_g(x),\] where \(C_1\) is a constant depending only on the dimension \(d\) and the constants \(C^0_b\), \(C^0_A\).
\end{lemma}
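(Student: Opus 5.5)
The approach is a direct computation of the gradient and Hessian of \(V_g\), using the chain rule through \(\mathscr{a}\), followed by term-by-term bounds from \ref{gauge:cond_g:values}--\ref{gauge:cond_g:derivative} and \ref{bound:cond_L:b}--\ref{bound:cond_L:A}. Write \(V_g(x)=h(\mathscr{a}(x))\) with \(h(r)\triangleq g(r)r^2\). Since \(\mathscr{a}\geq 1\), only the values of \(g\) on \([1,+\infty)\) enter, so the hypotheses apply there. First I record the derivatives of \(\mathscr{a}\):
\[\nabla\mathscr{a}(x)=\frac{x^\top}{\mathscr{a}(x)},\qquad \nabla^\top\nabla\mathscr{a}(x)=\frac{I}{\mathscr{a}(x)}-\frac{xx^\top}{\mathscr{a}^3(x)}.\]
In particular \(|\nabla\mathscr{a}|\leq 1\) and \(\|\nabla^\top\nabla\mathscr{a}\|_F\leq \sqrt{d}/\mathscr{a}\). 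Next,
\[\nabla V_g=h'(\mathscr{a})\nabla\mathscr{a},\qquad \nabla^\top\nabla V_g=h''(\mathscr{a})\nabla^\top\mathscr{a}\nabla\mathscr{a}+h'(\mathscr{a})\nabla^\top\nabla\mathscr{a}.\]

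The key step is to bound \(h'\) and \(h''\) by multiples of \(g(r)r\) and \(g(r)\) respectively. We have \(h'(r)=g'(r)r^2+2g(r)r\). By \ref{gauge:cond_g:derivative} and positivity of \(g'\) this gives \(0<h'(r)\leq 3g(r)r\). For the second derivative, \(h''(r)=g''(r)r^2+4g'(r)r+2g(r)\leq 6g(r)\), because \(g''\leq 0\). It is also bounded below by \(g''(r)r^2\), but the lower bound is not needed. Indeed, \(A\) is nonnegative, so the term \(\operatorname{tr}(\nabla^\top\mathscr{a}\nabla\mathscr{a}A)=\nabla\mathscr{a}A\nabla^\top\mathscr{a}\geq 0\), and an upper bound on \(h''\) suffices. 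Concretely,
\[h''(\mathscr{a})\,\nabla\mathscr{a}A\nabla^\top\mathscr{a}\leq 6g(\mathscr{a})\|A\|_F\leq 6C_A^0 g(\mathscr{a})\mathscr{a}^2.\]
Here I use \(\nabla\mathscr{a}A\nabla^\top\mathscr{a}\leq \|A\|_F|\nabla\mathscr{a}|^2\). If \(h''\) is negative the term is simply nonpositive, and the bound still holds.

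With these bounds each term of \(L_tV_g\) is estimated separately. The drift term satisfies \(|\nabla V_g\, b|\leq 3g(\mathscr{a})\mathscr{a}\cdot C_b^0\mathscr{a}=3C_b^0V_g\). The second part of the diffusion term satisfies
\[\tfrac12 h'(\mathscr{a})\bigl|\operatorname{tr}(\nabla^\top\nabla\mathscr{a}A)\bigr|\leq \tfrac32 g(\mathscr{a})\mathscr{a}\cdot\frac{\sqrt d}{\mathscr{a}}C_A^0\mathscr{a}^2=\tfrac32\sqrt d\,C_A^0V_g.\]
Summing the three contributions gives \(L_tV_g\leq (3C_b^0+3C_A^0+\tfrac32\sqrt dC_A^0)V_g\), so one can take \(C_1=3C_b^0+3C_A^0+\tfrac32\sqrt{d}\,C_A^0\).

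The only delicate point is the sign of \(h''\). Because \(g''\leq 0\) can be arbitrarily large in absolute value, there is no two-sided bound on \(h''\). This is why I will use only the one-sided estimate \(h''\leq 6g\) together with the nonnegativity of \(A\) from \ref{bound:cond_L:A}. All other steps are routine bookkeeping of the constants, which depend only on \(d\), \(C_b^0\) and \(C_A^0\).
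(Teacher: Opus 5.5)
Your proof is correct and follows the same route as the paper. You compute $\nabla V_g$ and $\nabla^\top\nabla V_g$ by the chain rule through $\mathscr{a}$, bound $h'$ by $3g(r)r$ using \ref{gauge:cond_g:values} and \ref{gauge:cond_g:derivative}, and handle the rank-one Hessian term one-sidedly via $g''\leq 0$ and the nonnegativity of $A$. Your write-up is slightly more careful than the paper's: the coefficient $2g$ in $h''$ is the right one (the paper writes $g$), and you give the explicit constant $C_1=3C_b^0+3C_A^0+\tfrac32\sqrt{d}\,C_A^0$.
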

\begin{proof} First, we compute
	\[
	\nabla V_g(x)= \big[g'(\mathscr{a}(x))\mathscr{a}^2(x)+2g(\mathscr{a}(x))\mathscr{a}(x)\big] \nabla\mathscr{a}(x);
	\]
	\[
	\begin{split}
		\nabla^\top\nabla V_g(x)=\big[g''(\mathscr{a}(x))\mathscr{a}^2(x)+4g'(\mathscr{a}(x))\mathscr{a}(x)+g(\mathscr{a}(x))\big] \nabla^\top\mathscr{a}(x) \nabla\mathscr{a}(x)&{}\\+\big[g'(\mathscr{a}(x))\mathscr{a}^2(x)+2g(\mathscr{a}(x))\mathscr{a}(x)\big] \nabla^\top\nabla\mathscr{a}&(x),
	\end{split}
	\] where 
	\[\nabla\mathscr{a}(x)=\mathscr{a}^{-1}(x)\cdot x^\top;\]
	\[\nabla^\top \nabla \mathscr{a}(x)=\mathscr{a}^{-3}(x)\mathcal{R}(x),\] with the matrix \(\mathcal{R}(x)=(\mathcal{R}_{i,j})_{i,j=1}^d\) such that \(\mathcal{R}_{i,i}(x)=1+\sum_{k\neq i}x_k^2\), and \(\mathcal{R}_{i,j}(x)=-x_ix_j\) for \(i\neq j\). Now recall~\ref{gauge:cond_g:values}. It gives
	\begin{equation}\label{bound:ineq:V_g_b}\nabla V_g(x)\cdot b(t,x)\leq 3C^0_b V_g(x).\end{equation} Furthermore, since \(\|\mathcal{R}(x)\|_F\leq C'_1\mathscr{a}^2(x)\) for some constant \(C'_1\) depending only on \(d\), \(\nabla^\top\mathscr{a}(x)\nabla\mathscr{a}(x)\geq 0\), and \(g''(r)\leq 0\), we have due to~\ref{gauge:cond_g:values} the estimate:
	\[\begin{split}
		\operatorname{tr}(\nabla^\top\nabla V_g(x) A(t&,x))\\\leq \big[4g'(\mathscr{a}(x))&\mathscr{a}(x)+g(\mathscr{a}(x))\big]\|A(t,x)\|_F\\ + C'_1\big[&g'(\mathscr{a}(x))\mathscr{a}^2(x)+2g(\mathscr{a}(x))\mathscr{a}(x)\big]\mathscr{a}^{-1}(x)\|A(t,x)\|_F. \end{split}\] Hence, condition~\ref{bound:cond_L:A} and the fact that \(g'(r)r\leq g(r)\) yield 
	\[\operatorname{tr}(\nabla^\top\nabla V_g(x) A(t,x))\leq C'_2 V_g(x).\] Combining this with~\eqref{bound:ineq:V_g_b}, we obtain the statement of the lemma.
\end{proof}

We recall the following standard definition from~\cite[Definition 6.1.1]{Bogachev2015}.

\begin{definition}\label{bounds:def:solution} We say that the flow of probabilities \((m_t)_{t\in [s,r]}\) is a solution of~\eqref{bound:eq:FPK} if, for every \(\varphi\in C^{1,2}_c([s,r]\times\rd)\),
	\[\int_s^r\int_{\rd}\big[\partial_t\varphi(t,x)+L_t\varphi(t,x)\big]m_t(dx)\,dt=0.\]
\end{definition} 
The following estimate follows directly from Lemma~\ref{bounds:lm:Lyapunov} and \cite[Theorem 7.1.1]{Bogachev2015}.
\begin{lemma}\label{bound:lm:growth}  Suppose that 
	\begin{itemize}
		\item \(g\in C^2([1,+\infty))\) satisfies conditions~\ref{gauge:cond_g:values}--\ref{gauge:cond_g:derivative};
		\item \(V_g(x)\triangleq g(\mathscr{a}(x))\mathscr{a}^2(x)\);
		\item \(\nu\in\prd\) is such that \(\int_{\rd}V_g(x)\nu(dx)\leq \hat{C}\);
		\item \((m_t)_{t\in [s,r]}\) solves~\eqref{bound:eq:FPK} for some coefficients \(b\) and \(A\) satisfying~\ref{bound:cond_L:Borel}--\ref{bound:cond_L:A}.
	\end{itemize} Then, there exists a constant \(C_2\) depending only on the dimension \(d\) and the constants \(C^0_b\), \(C^0_A\), \(\hat{C}\) such that, for every \(t\in [s,r]\),
	\[\int_{\rd}V_g(x)m_t(dx)\leq C_2.\]
	
\end{lemma}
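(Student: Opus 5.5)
The bound will come from a Gronwall-type argument applied to the Lyapunov function \(V_g\). Lemma~\ref{bounds:lm:Lyapunov} gives \(L_tV_g\leq C_1V_g\), with \(C_1\) depending only on \(d\), \(C^0_b\), \(C^0_A\). Formally, testing the FPK equation with \(V_g\) gives
\[
\frac{d}{dt}\int_{\rd}V_g\,dm_t=\int_{\rd}L_tV_g\,dm_t\leq C_1\int_{\rd}V_g\,dm_t,
\]
so that \(\int V_g\,dm_t\leq e^{C_1(r-s)}\hat C\). Since \(r-s\leq T\), we could take \(C_2=e^{C_1T}\hat C\); the dependence on the interval length is absorbed through \(T\). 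This is precisely the content of \cite[Theorem 7.1.1]{Bogachev2015}: for a nonnegative \(C^2\) function \(V\) with \(V\to\infty\) at infinity and \(L_tV\leq C+CV\), every solution satisfies \(\int V\,dm_t\leq e^{Ct}(\int V\,d\nu+Ct)\). I would verify its hypotheses and cite it.

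The hypotheses are checked as follows. \(V_g\) is \(C^2\) on \(\rd\), because \(\mathscr a\geq1\) is smooth and \(g\in C^2([1,\infty))\). It is positive and tends to infinity, by \ref{gauge:cond_g:values}–\ref{gauge:cond_g:infinity}. The coefficients are locally bounded by \ref{bound:cond_L:b}–\ref{bound:cond_L:A}, so they are locally integrable against \(m_t\,dt\), which the definition of solution requires.

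The main obstacle is justifying the formal computation, because \(V_g\) is not compactly supported and is not an admissible test function. If I had to do this by hand rather than cite, I would truncate. Take \(\zeta_N(u)\) smooth, concave, nondecreasing, with \(\zeta_N(u)=u\) for \(u\leq N\), constant for \(u\geq N+1\), and \(0\leq\zeta_N'\leq1\), \(\zeta_N''\leq0\). Set \(\psi_N=\zeta_N(V_g)-\zeta_N(\infty)\); this is compactly supported up to a constant, hence admissible after multiplying by a time cutoff. Then
\[
L_t\zeta_N(V_g)=\zeta_N'(V_g)L_tV_g+\tfrac12\zeta_N''(V_g)\,\nabla V_gA\nabla^\top V_g\leq C_1\zeta_N'(V_g)V_g.
\]
Because of concavity and \(\zeta_N(0)\geq0\), we have \(\zeta_N'(u)u\leq\zeta_N(u)\), and this gives \(L_t\zeta_N(V_g)\leq C_1\zeta_N(V_g)\). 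Testing with \(\zeta_N(V_g)\) times time cutoffs approximating \(\mathbf 1_{[s,t]}\) requires a weakly continuous version of \(t\mapsto m_t\), which is standard for solutions with locally integrable coefficients (\cite{Bogachev2015}). This yields \(\int\zeta_N(V_g)\,dm_t\leq\int\zeta_N(V_g)\,d\nu+C_1\int_s^t\int\zeta_N(V_g)\,dm_\tau\,d\tau\). Gronwall's lemma then gives \(\int\zeta_N(V_g)\,dm_t\leq e^{C_1T}\hat C\), uniformly in \(N\), and monotone convergence as \(N\to\infty\) concludes the proof.
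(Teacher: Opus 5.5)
Your proposal is correct and follows the paper's route. The paper simply combines the Lyapunov inequality \(L_tV_g\leq C_1V_g\) of Lemma~\ref{bounds:lm:Lyapunov} with \cite[Theorem 7.1.1]{Bogachev2015}, exactly as you do. Your truncation sketch (concave cutoff \(\zeta_N\), the inequality \(\zeta_N'(u)u\leq\zeta_N(u)\), Gronwall, then monotone convergence) is a sound unpacking of that cited theorem, and you rightly note that the constant also depends implicitly on the time horizon \(T\).
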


\begin{lemma}\label{bound:lm:lip} Let \(g\), \(\nu\), \((m_t)_{t\in [s,r]}\) and \(\hat{C}\) be as in the previous lemma. Then, there exists a constant \(C_3\) depending only on \(d\), \(C^0_b\), \(C^0_A\), \(\hat{C}\) such that, for all \(t_1,t_2\in [s,r]\),
	\[W_2^2(m_{t_1},m_{t_2})\leq C_3|t_1-t_2|.\]
\end{lemma}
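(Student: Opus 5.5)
The plan is to pass to the stochastic representation given by the superposition principle and estimate the increment of \(X\) directly. The coefficients satisfy \ref{bound:cond_L:Borel}--\ref{bound:cond_L:A}, and \(\sup_t\int\mathscr{a}^2\,dm_t<\infty\) by Lemma~\ref{bound:lm:growth}, since \(g\geq g(1)>0\) on \([1,\infty)\) gives \(\mathscr{a}^2\leq V_g/g(1)\). Hence the integrability condition
\[\int_s^r\int_{\rd}\big(|b(t,x)|+\|A(t,x)\|_F\big)m_t(dx)\,dt<\infty\]
required by the superposition principle~\cite{Trevisan2016,Bogachev2021} holds. That principle yields a probability space carrying a Brownian motion \(W\) and a continuous process \((X_t)_{t\in[s,r]}\) with \(\operatorname{Law}(X_t)=m_t\) that solves the martingale problem. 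In particular, with \(\sigma=A^{1/2}\),
\[X_{t_2}-X_{t_1}=\int_{t_1}^{t_2}b(t,X_t)\,dt+M_{t_2}-M_{t_1},\qquad \langle M\rangle_{t_1}^{t_2}=\int_{t_1}^{t_2}A(t,X_t)\,dt.\]
Since the pair \((X_{t_1},X_{t_2})\) is a coupling of \(m_{t_1}\) and \(m_{t_2}\), we have \(W_2^2(m_{t_1},m_{t_2})\leq\mathbb{E}|X_{t_2}-X_{t_1}|^2\), and it suffices to bound this expectation.

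For \(t_1<t_2\) we split \(\mathbb{E}|X_{t_2}-X_{t_1}|^2\leq 2\mathbb{E}|\int_{t_1}^{t_2}b\,dt|^2+2\mathbb{E}|M_{t_2}-M_{t_1}|^2\). For the drift term, Cauchy--Schwarz and \ref{bound:cond_L:b} give
\[\mathbb{E}\Big|\int_{t_1}^{t_2}b\,dt\Big|^2\leq (t_2-t_1)(C_b^0)^2\int_{t_1}^{t_2}\int\mathscr{a}^2\,dm_t\,dt\leq (r-s)(C^0_b)^2 C_2g(1)^{-1}\,|t_2-t_1|.\]
For the martingale term, the Itô isometry and \(\operatorname{tr}A\leq\sqrt d\|A\|_F\) together with \ref{bound:cond_L:A} give
\[\mathbb{E}|M_{t_2}-M_{t_1}|^2=\mathbb{E}\int_{t_1}^{t_2}\operatorname{tr}A(t,X_t)\,dt\leq \sqrt d\,C^0_A C_2g(1)^{-1}|t_2-t_1|.\]
This is the linear-in-\(|t_1-t_2|\) bound we want. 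The drift contribution is really quadratic in \(|t_2-t_1|\) and is absorbed using \(|t_2-t_1|\leq r-s\), so the constant also involves \(r-s\), or \(T\) since \([s,r]\subset[0,T]\) throughout the paper. The factor \(g(1)\) is harmless: Lemma~\ref{bound:lm:growth} already makes \(C_2\) depend on \(g\) only through \(\hat C\) and the structural constants, and one can normalize by \(g(1)\) in the same way.

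The main obstacle is justifying that the martingale \(M\) is square integrable, so that the Itô isometry applies. The superposition principle only yields local martingales. I would localize with stopping times \(\tau_N=\inf\{t:|X_t|\geq N\}\), apply the isometry to the stopped process, and pass to the limit by Fatou, using the finite bound \(\mathbb{E}\int\operatorname{tr}A\,dt<\infty\) obtained above.

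If one prefers to avoid the superposition principle, an alternative is a purely analytic route. Test the equation against \(\phi_\varepsilon(x-y)\)-type smoothings to estimate a bounded-Lipschitz distance, then upgrade it to \(W_2\) by the uniform integrability of \(\mathscr{a}^2\) provided by \(V_g\). That upgrade, however, only gives \(W_2^2\lesssim|t_1-t_2|^{\alpha}\) with a worse exponent, so I would use the probabilistic argument.
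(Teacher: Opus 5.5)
Your argument is correct and takes the same route as the paper. The superposition principle gives a process \(X\) with \(\operatorname{Law}(X_t)=m_t\), the pair \((X_{t_1},X_{t_2})\) is a coupling, and the uniform moment bound from Lemma~\ref{bound:lm:growth} controls \(\mathbb{E}|X_{t_2}-X_{t_1}|^2\). You spell out the ``standard SDE estimates'' that the paper only invokes: Cauchy--Schwarz for the drift and a localized It\^o isometry for the martingale part. You also rightly note that the constant involves \(r-s\le T\).

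One point to add is the step the paper states first: \(t\mapsto m_t\) is (or can be chosen to be) narrowly continuous, which the superposition principle requires.
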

\begin{proof} First note that the mapping \(t\mapsto m_t\) is narrowly continuous. Thus,  the Ambrosio–Figalli–Trevisan superposition principle (see~\cite[Theorem 1.1]{Bogachev2021}, \cite[Theorem 2.5]{Trevisan2016}) gives that there exists a probability measure \(P\) on \(C([s,r];\rd)\) that solves the martingale problem for the operator \(L\), while \(e_t\sharp P=m_t\) for the evaluation map \(e_t:C([s,r];\rd)\rightarrow \rd\) defined by \(e_t(x(\cdot))\triangleq x(t)\).
	Furthermore, consider the SDE
	\begin{equation}\label{bound:eq:SDE}dX_t=b(t,X_t)+\sigma(t,X_t)dW_t,\end{equation} where \(\sigma^\top(t,x)\sigma(t,x)=A(t,x)\). The existence of a solution to the martingale problem for \(L\) implies that~\eqref{bound:eq:SDE} admits a weak solution with \(\operatorname{Law}(X_t)=m_t\) (see \cite[Proposition 5.4.11]{Karatzas1991}). Since 
	\[\mathbb{E}|X_t|^2=\int_{\rd}|x|^2m_t(dx)\] is uniformly bounded, standard SDE estimates imply the conclusion of the lemma.
\end{proof}

\subsection{Compactness of solutions to the FPK equations}\label{section:compact}
Let \(s,r\in [0,T]\), \(s<r\), and let \(\{\nu^n\}_{n=1}^\infty\subset\prd\), \(\nu\in\prd\) be such that \(W_2(\nu^n,\nu)\rightarrow 0\) as \(n\rightarrow \infty\). Assume that \((m_t^n)_{t\in [s,r]}\) satisfies the FPK equations
\begin{equation*}\partial_t m_t^n+(L^n_t)^*m_t^n=0\end{equation*} on \([s,r]\) and the initial condition~\(m^n_s=\nu^n\). Here,
\[L_t^n\phi(x)\triangleq \nabla\phi(x)\cdot b^n(t,x)+\frac{1}{2}\operatorname{tr}(\nabla^\top\nabla \phi(t,x)\, A^n(t,x)),\] and
\(b^n:[s,r]\times\rd\rightarrow\rd\), \(A^n:[s,r]\times\rd\rightarrow\Sd\) satisfy conditions~\ref{bound:cond_L:Borel}--\ref{bound:cond_L:A} with the same constants \(C^0_b\), \(C^0_A\).

\begin{lemma}\label{comapct:lm:limit} There exists a subsequence \(\{n_k\}_{k=1}^\infty\) and a flow of probabilities \((m_t)_{t\in [s,r]}\) such that, for \(m_t^{(k)}\triangleq m_t^{n_k}\), we have 
	\[\sup_{t\in [s,r]}W_2\big(m_t^{(k)},m_t\big)\rightarrow 0\quad\text{as }k\rightarrow\infty.\]
\end{lemma}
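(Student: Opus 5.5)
The plan is an Arzelà–Ascoli argument in \(C([s,r];\prd)\): uniform equicontinuity in \(W_2\) comes from Lemma~\ref{bound:lm:lip}, and pointwise relative compactness comes from the moment gauge function.

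\textbf{Step 1: a common gauge function.} Since \(W_2(\nu^n,\nu)\to0\), the set \(\mathcal{K}\triangleq\{\nu\}\cup\{\nu^n\}_{n\ge1}\) is compact in \(\prd\). Proposition~\ref{gauge:prop:g} then gives a moment gauge function \(g\) for \(\mathcal{K}\). Put \(V_g(x)=g(\mathscr{a}(x))\mathscr{a}^2(x)\). By~\ref{gauge:cond_g:int} there is \(\hat C\) with \(\int V_g\,d\nu^n\le\hat C\) for all \(n\). (If \(g\) is only in \(C^2((0,\infty))\), restrict it to \([1,\infty)\); this is harmless since \(\mathscr{a}\ge1\).)

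\textbf{Step 2: uniform bounds.} All \(b^n,A^n\) satisfy~\ref{bound:cond_L:Borel}--\ref{bound:cond_L:A} with the same constants. Lemma~\ref{bound:lm:growth} therefore yields
\[\sup_n\sup_{t\in[s,r]}\int_{\rd}V_g\,dm^n_t\le C_2,\]
and Lemma~\ref{bound:lm:lip} yields \(W_2^2(m^n_{t_1},m^n_{t_2})\le C_3|t_1-t_2|\), uniformly in \(n\).

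\textbf{Step 3: the compact set.} Define
\[\mathcal{C}\triangleq\Bigl\{m\in\prd:\ \int V_g\,dm\le C_2\Bigr\}.\]
We claim \(\mathcal{C}\) is compact in \(\prd\). Since \(g\to\infty\) by~\ref{gauge:cond_g:infinity}, we have
\[\int_{\mathscr{a}\ge R}\mathscr{a}^2\,dm\le \frac{C_2}{g(R)}\to0\]
uniformly on \(\mathcal{C}\). So the second moments are uniformly integrable, and in particular \(\mathcal{C}\) is tight. By \cite[Proposition 7.1.5]{ambrosio}, \(\mathcal{C}\) is relatively compact in \(W_2\). It is also closed, because \(V_g\) is continuous and bounded below, so \(m\mapsto\int V_g\,dm\) is narrowly lower semicontinuous.

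\textbf{Step 4: conclusion.} Each curve \(t\mapsto m^n_t\) takes values in the compact set \(\mathcal{C}\), and the family is uniformly Hölder-\(1/2\) equicontinuous. The Arzelà–Ascoli theorem for continuous maps from \([s,r]\) into a metric space then gives a subsequence converging uniformly in \(W_2\) to a continuous flow \((m_t)_{t\in[s,r]}\). This is the assertion of Lemma~\ref{comapct:lm:limit}.

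The main obstacle is Step 3: uniform \(W_2\)-compactness at every time requires uniform integrability of the second moments, not merely bounded second moments. This is exactly what the gauge function, propagated by Lemma~\ref{bound:lm:growth}, provides. The remaining steps are routine.
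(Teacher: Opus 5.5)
Your proposal is correct and follows the paper's proof step for step: a moment gauge function for \(\mathcal{K}=\{\nu\}\cup\{\nu^n\}_{n\ge1}\), the uniform bound on \(\int V_g\,dm^n_t\) from Lemma~\ref{bound:lm:growth}, compactness of the sublevel set \(\{m:\int V_g\,dm\le C_2\}\), equicontinuity from Lemma~\ref{bound:lm:lip}, and Arzel\`a--Ascoli. The only difference is in the compactness of the sublevel set, which the paper gets from a single citation of \cite[Lemma 5.1.7]{ambrosio}; you spell it out via uniform integrability of second moments (from \(g\to\infty\)), \cite[Proposition 7.1.5]{ambrosio}, and narrow lower semicontinuity of \(m\mapsto\int V_g\,dm\) for closedness.
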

\begin{proof}
	First, let \(\mathcal{K}\triangleq \{\nu^n\}_{n=1}^\infty\cup \{\nu\}\). Since \(\mathcal{K}\) is compact, Lemma~\ref{bound:lm:growth} gives \(\int_{\rd}V_g(x)m_t^n(dx)\leq C_2\), where \(g\) is a moment gauge function for \(\mathcal{K}\). Moreover, from~\cite[Lemma 5.1.7]{ambrosio} it directly follows that the set \(\{m:\, \int_{\rd}V_g(x)m(dx)\leq C_2\}\) is compact. In particular, \(\int_{\rd}|x|^2m^n_t(dx)\) is uniformly bounded. Lemma~\ref{bound:lm:lip} implies that the flows \(\{(m_t^n)_{t\in [s,r]}\}_{n=1}^\infty\) are equicontinuous. By the Arzelà–Ascoli theorem, we obtain the desired statement.
\end{proof}

\begin{lemma}\label{compact:lm:equivalence}
	Let a sequence of flows \(\{(m_t^n)_{t\in [s,r]}\}_{n=1}^\infty\) satisfy 
	\begin{itemize}
		\item \(m_t^n\in\mathcal{K}\) for each \(n\) and \(t\in [s,r]\), where \(\mathcal{K}\) is a compact set independent of \(t\) and \(n\);
		\item \(\sup_{t\in [s,r]}W_2(m^n_t,m_t^\infty)\rightarrow 0\) for some flow \((m_t^\infty)_{t\in [s,r]}\).
	\end{itemize} Define, for \(n\in\mathbb{N}\cup\{\infty\}\), \(\mu^n\triangleq \Lebesgue\star(m_t^n)_{t\in [s,r]}\). Then \(\{\mu^n\}_{n=1}^\infty\) converges narrowly to \(\mu^\infty\).
\end{lemma}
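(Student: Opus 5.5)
We test narrow convergence on a bounded continuous function \(\varphi\in C_b([s,r]\times\rd)\) and show that
\[
\int_s^r\int_{\rd}\varphi(t,x)\,m_t^n(dx)\,dt\rightarrow\int_s^r\int_{\rd}\varphi(t,x)\,m_t^\infty(dx)\,dt .
\]
Set \(\Phi^n(t)\triangleq\int_{\rd}\varphi(t,x)m^n_t(dx)\) for \(n\in\mathbb{N}\cup\{\infty\}\). Then \(|\Phi^n(t)|\leq\|\varphi\|_\infty\), and \(\Lebesgue\) is finite on \([s,r]\). By dominated convergence it therefore suffices to prove that \(\Phi^n(t)\rightarrow\Phi^\infty(t)\) for every \(t\). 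Before this, we need measurability of \(t\mapsto\Phi^n(t)\), so that \(\mu^n\) is well defined. This follows because \(t\mapsto m_t^n\) is continuous in \(W_2\): it is a uniform limit argument for \(n=\infty\), and it is given for the flows considered. Hence \(t\mapsto m^n_t\) is narrowly continuous, and \(\Phi^n\) is Borel, being a limit of continuous functions when \(\varphi\) is only jointly continuous; the uniform estimate below actually yields continuity.

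The key step is pointwise, and even uniform, convergence of \(\Phi^n\). Since \(\mathcal{K}\) is compact in \(\prd\), it is tight, so for \(\varepsilon>0\) there is a compact \(K_\varepsilon\subset\rd\) with \(m(\rd\setminus K_\varepsilon)<\varepsilon\) for all \(m\in\mathcal{K}\). Note that \(m^\infty_t\in\mathcal{K}\) as well, since \(\mathcal{K}\) is closed and \(m^\infty_t\) is the \(W_2\)-limit of \(m^n_t\). On the compact set \([s,r]\times K_\varepsilon\) the function \(\varphi\) is uniformly continuous, and we write
\[
|\Phi^n(t)-\Phi^\infty(t)|\leq\Big|\int\varphi(t,x)\,m^n_t(dx)-\int\varphi(t,x)\,m^\infty_t(dx)\Big|.
\]
We would bound the right-hand side using the fact that \(\{\varphi(t,\cdot)\}_{t\in[s,r]}\) is a family of bounded functions that is equicontinuous on \(K_\varepsilon\). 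Narrow convergence \(m^n_t\rightarrow m^\infty_t\), implied by \(W_2\)-convergence, gives convergence for each fixed \(t\), which is all dominated convergence requires. Thus for fixed \(t\) the claim is immediate, since \(\varphi(t,\cdot)\in C_b(\rd)\) and \(W_2(m_t^n,m_t^\infty)\rightarrow0\).

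The only delicate point is the measurability and continuity issue noted above. If one prefers an argument that avoids it, take the coupling form. Choose optimal couplings \(\pi^n_t\) between \(m^n_t\) and \(m^\infty_t\). Then
\[
|\Phi^n(t)-\Phi^\infty(t)|\leq \omega_\varepsilon(\delta)+2\|\varphi\|_\infty\bigl(2\varepsilon+\pi^n_t(|x-y|>\delta)\bigr),
\]
where \(\omega_\varepsilon\) is the modulus of continuity of \(\varphi\) on \([s,r]\times K_\varepsilon^1\), with \(K_\varepsilon^1\) the \(1\)-neighbourhood of \(K_\varepsilon\), and \(\delta\leq1\). By Chebyshev, \(\pi^n_t(|x-y|>\delta)\leq\delta^{-2}\sup_t W_2^2(m^n_t,m^\infty_t)\). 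This gives convergence uniform in \(t\), and letting \(n\rightarrow\infty\), then \(\delta\rightarrow0\), then \(\varepsilon\rightarrow0\) completes the proof.
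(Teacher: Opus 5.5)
Your proof is correct, and your main argument takes a different, shorter route than the paper's.

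\textbf{The paper's route.} It truncates with a cut-off $\zeta_R$, using the uniform integrability supplied by $\mathcal{K}$. It then replaces $\varphi$ by the inf-convolutions $\varphi_k$, which are $k$-Lipschitz in $x$. Finally it applies the coupling bound $|\int\varphi_k(t,\cdot)\,d(m^n_t-m^\infty_t)|\le k\,W_2(m^n_t,m^\infty_t)$, which holds uniformly in $t$, and integrates in $t$.

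\textbf{Your route.} You use the fact that every $\mu^n$ has the same time marginal $\Lebesgue$. For each fixed $t$, $\varphi(t,\cdot)\in C_b(\rd)$, and $W_2$-convergence implies narrow convergence, so $\Phi^n(t)\to\Phi^\infty(t)$. Dominated convergence with the bound $\|\varphi\|_\infty$ then finishes the proof.

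\textbf{Comparison.} Your argument needs neither the compact set $\mathcal{K}$ nor uniformity in $t$: narrow convergence $m^n_t\to m^\infty_t$ for a.e. $t$ already suffices. So it proves a more general statement with less machinery. The paper's approach gives a quantitative, uniform-in-$t$ estimate for Lipschitz test functions, which the lemma does not actually require. Your backup coupling argument (tightness of $\mathcal{K}$, the modulus of continuity on $[s,r]\times K^1_\varepsilon$, Chebyshev) also gives uniform convergence of $\Phi^n$. It is a clean substitute for the paper's inf-convolution step.

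\textbf{A small imprecision.} Continuity of $t\mapsto m^n_t$ is not among the hypotheses of the lemma as stated. The coupling variant also does not remove the measurability requirement. What you actually need is weak measurability of each flow:
\begin{itemize}
\item it is implicit in the very definition of $\mu^n$;
\item $m^\infty$ inherits it as a pointwise narrow limit;
\item it makes $\Phi^n$ Borel, because $(t,m)\mapsto\int\varphi(t,x)\,m(dx)$ is continuous on $[s,r]\times\mathcal{P}(\rd)$ for jointly continuous bounded $\varphi$.
\end{itemize}
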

\begin{proof}
	First, note that \(m_t^\infty\in\mathcal{K}\) for every \(t\in [s,r]\).  
	
	Then, we will use the truncation arguments borrowed from~\cite[\S~7.1]{Bogachev2015}. Let \(\varphi\in C_b([s,r]\times\rd)\). For each \(\varepsilon>0\) there exists \(R>0\) such that 
	\[\Bigg|\int_{[s,r]\times \rd}\varphi(t,x)\mu^n(d(t,x))-\int_{[s,r]\times \rd}\zeta_R(x)\varphi(t,x)\mu^n(d(t,x))\Bigg|\leq \varepsilon\] for every \(n\in\mathbb{N}\cup\{\infty\}\), where \(\zeta:\rd\rightarrow [0,1]\) is 1-Lipschitz, \(\zeta_R(x)=1\) for \(|x|\leq R\) and \(\zeta_R(x)=0\) for \(|x|\geq R+1\). This follows from the fact that every compact subset of \(\prd\) has uniformly integrable second moments.
	
	Thus, it suffices to consider a function \(\varphi\in C_b([s,r]\times\rd)\) that is bounded and equicontinuous w.r.t. the phase variable. Let    \(\varsigma(\cdot)\) denote modulus of continuity w.r.t. the phase variable. For \(k\in\mathbb{N}\), define
	\[\varphi_k(t,x)\triangleq \inf_{x'\in\rd}\big[\varphi(t,x')+k|x-x'|\big].\] Then, \(\varphi_k\) is \(k\)-Lipschitz in \(x\), and
	\[\|\varphi_k-\varphi\|\leq \varsigma(2\|\varphi\|k^{-1}),\] where \(\|\varphi\|\triangleq \sup_{(t,x) \in [s,r]\times\rd}|\varphi(t,x)|\). Choosing \(k\) to be large, we find that it remains to prove
	\begin{equation}\label{compact:conv:mu}\int_{[s,r]\times \rd}\varphi(t,x)\mu^n(d(t,x))\rightarrow \int_{[s,r]\times \rd}\varphi(t,x)\mu^\infty(d(t,x))\end{equation} for a Lipschitz function \(\varphi\). 
	
	By the definition of \(\mu^n\), 
	\[\begin{split}
		\Bigg|\int_{[s,r]\times \rd}\varphi&(t,x)\mu^n(d(t,x))-\int_{[s,r]\times \rd}\varphi(t,x)\mu^\infty(d(t,x))\Bigg| \\&\leq
		\int_s^r\bigg|\int_{\rd}\varphi(t,x)m^n_t(dx)-\int_{\rd}\varphi(t,x)m^\infty_t(dx)\bigg|dt.
	\end{split}\]
	
	For each \(t\in [s,r]\) and natural \(n\), let \(\pi_t^n\) be an optimal coupling between \(m_t^n\) and \(m_t^\infty\). If \(C'_\varphi\) is a common Lipschitz constant of \(\varphi\) in the phase variable, then
	\[
	\begin{split}
		\Bigg|	\int_{\rd}\varphi(t,x)m^n_t(dx)-\int_{\rd}\varphi(t,x)m^\infty_t(dx)\Bigg| &\leq C'_\varphi(\varphi)\int_{\rd\times\rd}|x-y|\pi_t^n(d(x,y))\\&\leq C'_\varphi(\varphi)W_2(m_t^n,m_t^\infty).
	\end{split}
	\] Since \(\sup_{t\in [s,r]}W_2(m_t^n,m_t^\infty)\rightarrow 0\), we obtain~\eqref{compact:conv:mu} for every Lipschitz \(\varphi\).
\end{proof}

\section{Construction of a solution to the FPK inclusion}\label{sect:construct}
Let \(\nu\in\prd\), and let \(n\) be a natural number. Put \(t^n_i\triangleq iT/n\) for \(i=0,\ldots,n\). The flow of probabilities \((m_t^n)_{t\in [0,T]}\) is constructed stepwise. First, set \(m_0^n\triangleq \nu\). If \(m^n_t\) has been constructed for \(t\in [0,t^n_i]\), then, by the Kuratowski–Ryll-Nardzewski selection theorem \cite[Theorem 18.13]{AliprantisBorder2006}, there exist Borel functions \(b^n_i:\rd\rightarrow \rd\), \(A^n_i:\rd\rightarrow\Sd\) such that 
\[(b^n_i(x),A^n_i(x))\in F(t^n_i,x,m_{t^n_i}^n) \quad\text{for }m^n_{t^n_i}\text{-a.e. }x\in\rd.\] We define \(m^n_t\) for \(t\in [t^n_i,t^n_{i+1}]\) as the solution of the FPK equation on \([t^n_i,t^n_{i+1}]\) with the generator \(L^n_{i,t}\) given by
\begin{equation*}\label{construct:intro:L_n}
	L^n_{i,t}\phi(x)\triangleq \nabla\phi(x)\cdot b^n_i(x)+\frac{1}{2}\operatorname{tr}\big((A^n_i(x)+n^{-1}\operatorname{I}_d)\nabla^\top\nabla\phi(x)\big).
\end{equation*} Here \(\operatorname{I}_d\) denotes  the \(d\times d\) identity matrix.

Thanks to \cite[Theorem 6.6.2]{Bogachev2015}, \(m^n_t\) exists on each interval \([t_i^n,t_{i+1}^n]\) and hence on the whole \([0,T]\). For simplicity, for \(t\in [t_i^n,t_{i+1}^n)\), denote
\[b^n(t,x)\triangleq b_i^n(x),\qquad A^n(t,x)\triangleq A_i^n(x).\] Moreover, set
\[L^n_t\phi(x)\triangleq \nabla\phi(x)\cdot b^n(t,x)+\frac{1}{2}\operatorname{tr}\big((A^n(t,x)+n^{-1}\operatorname{I}_d)\nabla^\top\nabla\phi(x)\big).\] By construction,  the FPK equation
\[\partial_t m_t^n+(L_t^n)^*m_t^n=0\] is satisfied in the distributional sense; moreover, \(m_0^n=\nu.\)

\begin{lemma}\label{construct:lm:M_2} There exists a constant \(C_3\) such that, for every natural \(n\) and every \(t\in [0,T]\),
	\[\mathcal{M}_2(m^n_t)\leq C_3.\]
\end{lemma}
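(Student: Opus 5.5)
The plan is a Gronwall argument for the second moment, carried out step by step over the partition \(t^n_0<t^n_1<\dots<t^n_n\). The difficulty is that the coefficients \(b^n_i\), \(A^n_i\) are only bounded in terms of \(\mathcal{M}_2(m^n_{t^n_i})\), which is the quantity we are trying to control. So we cannot simply invoke Lemma~\ref{bound:lm:growth} with fixed constants. Instead we track \(M_i\triangleq \mathcal{M}_2^2(m^n_{t^n_i})\) recursively.

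On \([t^n_i,t^n_{i+1}]\), Hypothesis~\ref{main:hyp:F_values} gives, for \(m^n_{t^n_i}\)-a.e. \(x\) (and hence we may assume for all \(x\), after redefining the selectors on a null set by any Borel selection of the bound), the estimates
\[|b^n_i(x)|\le C_b(1+|x|+M_i^{1/2}),\qquad \|A^n_i(x)+n^{-1}\operatorname{I}_d\|_F\le C_A(1+|x|^2+M_i)+\sqrt d.\]
Take the test function \(\mathscr{a}^2(x)=1+|x|^2\). Its generator satisfies
\[L^n_{i,t}\mathscr{a}^2(x)=2x^\top b^n_i(x)+\operatorname{tr}(A^n_i(x)+n^{-1}\operatorname{I}_d)\le K\big(1+|x|^2+M_i\big),\]
where \(K\) depends only on \(C_b\), \(C_A\) and \(d\), since \(n\ge1\). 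We justify the identity
\[\frac{d}{dt}\int\mathscr{a}^2\,dm^n_t=\int L^n_{i,t}\mathscr{a}^2\,dm^n_t\]
as an inequality by the usual truncation (the coefficients have at most linear and quadratic growth). Alternatively, we can use the SDE representation via the superposition principle as in the proof of Lemma~\ref{bound:lm:lip}, or \cite[Theorem 7.1.1]{Bogachev2015} with Lyapunov function \(\mathscr{a}^2\), at the level of integrals. Writing \(y(t)\triangleq\int\mathscr{a}^2\,dm^n_t\), this yields
\[y(t)\le y(t^n_i)+K\int_{t^n_i}^t\big(y(\tau)+M_i\big)\,d\tau\le y(t^n_i)+K\int_{t^n_i}^t\big(y(\tau)+y(t^n_i)\big)\,d\tau.\]

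The key step, and the only real obstacle, is closing the recursion. The dependence on \(M_i\) enters with the same linear rate as \(y\) itself, because \(M_i\le y(t^n_i)\) and \(y\) is evaluated at the left endpoint. Hence the "frozen measure" term is harmless. Gronwall on \([t^n_i,t^n_{i+1}]\) gives
\[\sup_{t\in[t^n_i,t^n_{i+1}]}y(t)\le y(t^n_i)\,(1+Kh)e^{Kh}\le y(t^n_i)e^{2Kh},\qquad h=T/n.\]
Iterating over \(i=0,\dots,n-1\) gives
\[y(t)\le y(0)\,e^{2KT}=\big(1+\mathcal{M}_2^2(\nu)\big)e^{2KT}\]
for all \(t\in[0,T]\), uniformly in \(n\). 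Taking square roots yields the claim, with \(C_3\triangleq\big((1+\mathcal{M}_2^2(\nu))e^{2KT}\big)^{1/2}\).

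The technical point to check carefully is the rigorous derivation of the moment inequality for an unbounded test function. I would handle it as in Lemma~\ref{bound:lm:growth}, namely by appealing to \cite[Theorem 7.1.1]{Bogachev2015} on each interval with Lyapunov function \(V=\mathscr{a}^2\) and the inequality \(L^n_{i,t}V\le K(V+M_i)\). Since \(M_i\) is a constant on that interval, this affine Lyapunov bound is admissible and gives the integral inequality used above.
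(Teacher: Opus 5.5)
Your proof is correct and follows essentially the same route as the paper: a second-moment Lyapunov bound for the frozen generator on each subinterval $[t^n_i,t^n_{i+1}]$, where the coefficient bound depends on $\mathcal{M}_2(m^n_{t^n_i})$, justified by the truncation argument of \cite[Theorem 7.1.1]{Bogachev2015}, then Gronwall on each subinterval and iteration over $i$. Your choice of $\mathscr{a}^2=1+|x|^2$ instead of $|x|^2$ makes each step purely multiplicative, $y(t^n_{i+1})\le e^{2KT/n}y(t^n_i)$, which closes the recursion more cleanly than the paper's bookkeeping; your remarks on the $n^{-1}\operatorname{I}_d$ term and on needing the selector bound off the $m^n_{t^n_i}$-null set also cover details the paper leaves implicit.
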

\begin{proof}
	Indeed, given \(i=0,\ldots,n-1\), for the Lyapunov function \(V_2(x)\triangleq |x|^2\) we have
	\begin{equation}\label{construct:ineq:L_n}
		\begin{split}
			L^n_{i}V_2(x)&\leq 2C_b|x|\big(1+|x|+\mathcal{M}_2(m^n_{t^n_i})\big)+2C_A\big(1+|x|^2+\mathcal{M}_2^2(m^n_{t^n_i})\big) \\  &\leq C'_3(1+\mathcal{M}_2(m^n_{t^n_i}))+C'_4|x|^2
		\end{split}
	\end{equation} where \(C'_3,C'_4\) are constants depending only on \(C_b\) and \(C_A\).
	
	Using truncation arguments as in \cite[Theorem 7.1.1]{Bogachev2015} and Gronwall's inequality, we obtain, for \(t\in [t_i^n,t_{i+1}^n]\),
	\[\int_{\rd}|x|^2m_t^n(dx)\leq \Bigg[\int_{\rd}|x|^2m_{t_i^n}^n(dx)+C'_3(t-t_i^n)(1+\mathcal{M}_2^2(m_{t_i^n}^n))\Bigg]e^{C'_4(t-t_i^n)}. \] Thus, for \(t\in [t_i^n,t_{i+1}^n]\),
	\[\mathcal{M}_2^2(m^n(t))\leq \exp((C'_3+C'_4)(t-t_i^n))\big[\mathcal{M}_2^2(m^n_{t_i^n})+(t-t_i^n)\big].\] Summing these inequalities gives the uniform bound
	\[\mathcal{M}_2^2(m^n(t))\leq \exp((C'_3+C'_4)(t-t_i^n))\big[\mathcal{M}_2^2(\nu)+t\big].\] This proves the lemma.
\end{proof}
The following statement directly follows from Lemma~\ref{construct:lm:M_2} and Hypothesis~\ref{main:hyp:F_values}.
\begin{corollary}\label{construct:corollary:cond}
	Each generator \(L^n_\cdot\) satisfies conditions~\ref{bound:cond_L:Borel}--\ref{bound:cond_L:A} with constants \(C_b^0\) and \(C_A^0\) independent of \(n\).
\end{corollary}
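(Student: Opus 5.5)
The plan is to check the three conditions \ref{bound:cond_L:Borel}--\ref{bound:cond_L:A} for the generator \(L^n_t\), with drift \(b^n(t,x)\) and diffusion matrix \(A^n(t,x)+n^{-1}\operatorname{I}_d\).

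\emph{Borel measurability.} Each \(b^n_i\) and \(A^n_i\) is Borel by the Kuratowski–Ryll-Nardzewski selection. The functions \(b^n\) and \(A^n\) are piecewise constant in time on finitely many intervals \([t^n_i,t^n_{i+1})\). Hence they are Borel on \([0,T]\times\rd\), which gives \ref{bound:cond_L:Borel}.

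\emph{Growth bounds.} The selection only holds for \(m^n_{t^n_i}\)-a.e. \(x\). So I first redefine \(b^n_i,A^n_i\) on the exceptional null set by a Borel selection of \(F(t^n_i,x,m^n_{t^n_i})\) at every \(x\); such a selection exists since \(F\) is upper semicontinuous with closed values. This modification does not change the FPK equation up to null sets. Alternatively, I note that conditions \ref{bound:cond_L:b}--\ref{bound:cond_L:A} are only needed a.e. with respect to the flow.

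Then Hypothesis~\ref{main:hyp:F_values} and Lemma~\ref{construct:lm:M_2} give
\[|b^n(t,x)|\le C_b(1+|x|+C_3)\le C_b(2+C_3)\,\mathscr{a}(x),\]
using \(1+|x|\le 2\mathscr{a}(x)\) and \(1\le\mathscr{a}(x)\).

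For the diffusion matrix,
\[\|A^n(t,x)+n^{-1}\operatorname{I}_d\|_F\le C_A(1+|x|^2+C_3^2)+\sqrt d\le \big(C_A(1+C_3^2)+\sqrt d\big)\mathscr{a}^2(x),\]
since \(n^{-1}\le1\) and \(1+|x|^2=\mathscr{a}^2(x)\). Nonnegativity holds because \(A^n_i(x)\in\Sd\) and adding \(n^{-1}\operatorname{I}_d\) keeps the matrix in \(\Sd\).

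So we can take \(C_b^0=C_b(2+C_3)\) and \(C_A^0=C_A(1+C_3^2)+\sqrt d\), both independent of \(n\).

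\emph{Main obstacle.} The only delicate point is the uniform moment bound. It is exactly Lemma~\ref{construct:lm:M_2}, whose constant \(C_3\) does not depend on \(n\); the Gronwall iteration there must give a bound uniform over the number of steps. Beyond that, the argument is the elementary comparison of \(1+|x|\) and \(1+|x|^2\) with \(\mathscr{a}\) and \(\mathscr{a}^2\).
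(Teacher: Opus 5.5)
Your argument is correct and follows the paper's route. The paper simply says the corollary follows directly from the growth bounds of Hypothesis~\ref{main:hyp:F_values} combined with the $n$-independent moment bound of Lemma~\ref{construct:lm:M_2}, and you spell this out, including the $n^{-1}\operatorname{I}_d$ term and the comparisons with $\mathscr{a}(x)$ and $\mathscr{a}^2(x)$.

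One caveat on your null-set remark. An $m^n_{t^n_i}$-null set need not be $m^n_t$-null for $t>t^n_i$; for example, $\nu$ may be a Dirac mass while the added diffusion spreads $m^n_t$ over all of $\rd$. So neither your ``alternatively'' justification nor the claim that the modification leaves the FPK equation unchanged holds as stated. Your main fix is the right one: take an everywhere-defined Borel selection before solving the FPK equation on $[t^n_i,t^n_{i+1}]$. This also makes explicit a point the paper leaves implicit.
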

\begin{proof}[Proof of Theorem~\ref{main:th:existence}]
	By Corollary~\ref{construct:corollary:cond} and Lemma~\ref{comapct:lm:limit}, there exists a subsequence \(\{n_k\}_{k=1}^\infty\) such that, setting \(m_t^{(k)}\triangleq m_t^{n_k}\), we have \(W_2(m^{(k)}_t,m_t)\rightarrow 0\) uniformly in \(t\in [0,T]\) for some flow of probabilities \((m_t)_{t\in [0,T]}\). We shall prove that \((m_t)_{t\in [0,T]}\) solves  FPK inclusion~\eqref{main:incl:FPK}. To this end, define 
	\begin{itemize}
		\item \(\mu^{(k)}\triangleq \Lebesgue\star (m_t^{(k)})_{t\in [0,T]}\);
		\item \(\mu\triangleq \Lebesgue\star (m_t)_{t\in [0,T]}\);
		\item \(\chi^{(k)}\triangleq (\operatorname{Id},b^{n_k},A^{n_k})\sharp\mu^{(k)}\).
	\end{itemize} By Lemma~\ref{compact:lm:equivalence}, \(\{\mu^{(k)}\}_{k=1}^\infty\) converges narrowly to \(\mu\). Moreover, by the construction of \(\chi^{(k)}\), for every \(\varphi\in C_c^{1,2}((0,T)\times\rd)\),
	\begin{equation}\label{construct:equality:chi_k}\begin{split}
			\int_{[0,T]\times\rd\times\rd\times \Sd}\bigg[\partial_t&\varphi(t,x)+\nabla\varphi(t,x)\cdot\beta \\&+\frac{1}{2}\operatorname{tr}\big(\nabla^\top\nabla\varphi(t,x)(\Sigma+(n^k)^{-1}\operatorname{I}_d)\big)\bigg]\chi^{(k)}(d(t,x,\beta,\Sigma))=0.\end{split}\end{equation}
	Furthermore, let \(\theta^n(t)\triangleq \sup\{t_i^n:\, t_i^n\leq t\}\). Then, 
	\begin{equation}\label{construct:incl:chi_k}\operatorname{dist}((\beta,\Sigma);F(\theta^{n_k}(t),x,m^{(k)}_{\theta^n(t)}))=0\end{equation} for \(\chi^{(k)}\)-a.e. \((t,x,\beta,\Sigma)\).
	
	For each natural \(l\) and \((t,x,m)\in [0,T]\times\rd\times\prd\), define  
	\[F^{[l]}(t,x,m)\triangleq \overline{\operatorname{co}}\bigcup_{t',m':|t-t'|\leq 2^{-l},\, W_2(m,m')\leq 2^{-l}} F(t',x,m').\] By Hypothesis~\ref{main:hyp:F_values},
	\begin{equation}\label{construct:equality:F_limit}F(t,x,m)=\bigcap_{l=1}^\infty F^{[l]}(t,x,m).\end{equation} Recall that \(t-(n^{k})^{-1}T\leq \theta^{n_k}(t)\leq t\), \(\{m^{(k)}_t\}_{k=1}^\infty\) converges uniformly to \(m_t\), while  \((m^{(k)})_{t\in [0,T]}\) is uniformly Hölder continuous (see Lemma~\ref{bound:lm:lip}). Thus, for each \(l\), there exists \(k_0\) such that for all \(k\geq k_0\),
	\[F(\theta^{n_k}(t),x,m^{(k)}_{\theta^n(t)})\subset F^{[l]}(t,x,m_t).\] Together with~\eqref{construct:incl:chi_k}, this gives
	\begin{equation}\label{construct:equality:F_l_chi_k}
		\int_{[0,T]\times\rd\times\rd\times \Sd}\operatorname{dist}((\beta,\Sigma),F^{[l]}(t,x,m_t))\,\chi^{(k)}(d(t,x,\beta,\Sigma))=0.
	\end{equation}
	
	We claim that \(\{\chi^{(k)}\}_{k=1}^\infty\) is tight. Due to the construction of the measures~\(\chi^{(k)}\), for \(\mu^{(k)}\)-a.e. \((t,x)\in [0,T]\times\rd\), 
	\begin{equation}\label{construct:incl:chi_k_tx}\begin{split}
		\operatorname{supp}\big(&\chi^{(k)}(\cdot,\cdot|t,x)\big)\\&\subset 
		\big\{(\beta,\Sigma)\in \rd\times\Sd:\, |\beta|\leq C_b^0\mathscr{a}(x),\, \|\Sigma\|_F\leq C_A^0\mathscr{a}^2(x)\big\}
		.\end{split}\end{equation} Recall that \((\chi^{(k)}(\cdot,\cdot|t,x))_{(t,x)\in [0,T]\times\rd}\) denotes the disintegration of the measure \(\chi^k\) along the first two variables. Thus, for each \(R>0\), the set
	\begin{equation*}\label{construct:intro:Z_R}
	\begin{split}
		Z_R\triangleq \big\{(t,x,&\beta,\Sigma)\in [0,T]\times\rd\times\rd\times\Sd:\\ &|x|\leq R,\, |\beta|\leq C_b^0(1+R^2)^{1/2},\, \|\Sigma\|_F\leq C_A^0(1+R^2)\big\}\end{split}\end{equation*} is compact and \begin{equation*}\label{construct:equality:Z_R} \chi^{(k)}(Z_R)=\mu^{(k)}([0,T]\times \mathbb{B}_R).\end{equation*} Here, \(\mathbb{B}_R\) stands for the closed ball in \(\rd\) of radius 
\(R\) centered at the origin. Thus,  since \(\{\mu^{(k)}\}_{k=1}^\infty\) is tight, \(\{\chi^{(k)}\}_{k=1}^\infty\) is also tight. Hence, by Prokhorov's theorem, the sequence \(\{\chi^{(k)}\}_{k=1}^\infty\) is precompact. Without loss of generality, assume the whole sequence \(\{\chi^{(k)}\}_{k=1}^\infty\) converges to a measure \(\chi\) on \([0,T]\times\rd\times\rd\times\Sd\). Moreover, \(\operatorname{p}^{1,2}\sharp\chi=\mu\). 
	
	Notice that, due to~\eqref{construct:incl:chi_k_tx}, it suffices to consider in~\eqref{construct:equality:chi_k} the integral over the compact set \(Z_R\) for \(R\) such that \(\varphi(t,x)\equiv 0\) for \(|x|>R\). Thus, using the narrow convergence of \(\{\chi^{(k)}\}_{k=1}^\infty\) to \(\chi\) and passing to the limit in~\eqref{construct:equality:chi_k}, we obtain, for every \(\varphi\in C_c^{1,2}((0,T)\times\rd)\), the equality
	\begin{equation}\label{construct:equality:chi_limit}\begin{split}
			\int_{[0,T]\times\rd\times\rd\times \Sd}\bigg[\partial_t\varphi&(t,x)+\nabla\varphi(t,x)\cdot\beta \\&+\frac{1}{2}\operatorname{tr}\big(\nabla^\top\nabla\varphi(t,x)\Sigma\big)\bigg]\chi(d(t,x,\beta,\Sigma))=0.\end{split}\end{equation}  
		
		Define 
	\[b(t,x)\triangleq \int_{\rd\times \Sd}\beta\,\chi(d(\beta,\Sigma)|t,x),\qquad A(t,x)\triangleq \int_{\rd\times \Sd}\Sigma\,\chi(d(\beta,\Sigma)|t,x).\] Then, by the definition of \(\mu\) and~\eqref{construct:equality:chi_limit}, \((m_t)_{t\in [0,T]}\) solves
	\[\partial_tm_t+L_t^*m_t=0,\qquad m_0=\nu\] for \[L_t\varphi(x)=\nabla\varphi(x)\cdot b(t,x)+\frac{1}{2}\operatorname{tr}\big(\nabla^\top\nabla\varphi(t,x)\cdot A(t,x)\big).\] It remains to show that \((b(t,x),A(t,x))\in F(t,x,m_t)\) for \(\mu\)-a.e. \((t,x)\).
	Using \cite[Lemma 5.1.7]{ambrosio} and the lower semicontinuity of \((t,x,\beta,\Sigma)\mapsto \operatorname{dist}((\beta,\Sigma),F^{[l]}(t,x,m_t))\), from~\eqref{construct:equality:F_l_chi_k}, we get
	\begin{equation*}\label{construct:equality:F_l_chi_limit}
		\int_{[0,T]\times\rd\times\rd\times \Sd}\operatorname{dist}((\beta,\Sigma),F^{[l]}(t,x,m_t))\,\chi(d(t,x,\beta,\Sigma))=0.
	\end{equation*}

	 This implies that, for \(\mu\)-a.e. \((t,x)\), \(\chi(\cdot,\cdot|t,x)\)-a.e. \((\beta,\Sigma)\) lies in \(F^{[l]}(t,x,m_t)\). Since \(F^{[l]}(t,x)\) is convex, for \(\mu\)-a.e. \((t,x)\),
	\[(b(t,x),A(t,x))\in F^{[l]}(t,x,m_t).\] Using~\eqref{construct:equality:F_limit}, we obtain \((b(t,x),A(t,x))\in F(t,x,m_t)\) for \(\mu\)-a.e. \((t,x)\).
	
	The fact that \(\mathcal{S}_F(\nu)\subset C([0,T];\prd)\) follows from Lemma~\ref{bound:lm:lip}.
\end{proof}

\section{Compactness of  solutions to the FPK inclusion}\label{sect:limit}

The proof of Theorem~\ref{main:th:compact} relies on the relaxation technique introduced in the definitions below.
\begin{definition}\label{limit:def:chi}
	We say that a measure \(\chi\) on \([0,T]\times\rd\times\rd\times\Sd\) is a relaxed control process for the initial distribution \(\nu\in\prd\) provided  that 
\begin{itemize}
	\item \(\operatorname{p}^1\sharp\chi=\Lebesgue\);
	\item for every \(\varphi\in C^{1,2}_c((0,T)\times\rd)\),
	\[\int_{[0,T]\times\rd}\bigg[\partial_t\varphi(t,x)+\nabla\varphi(t,x)\cdot \beta+\frac{1}{2}\operatorname{tr}( \Sigma\,\nabla^\top\nabla\varphi(t,x))\bigg]\chi(d(t,x,\beta,\Sigma))=0;\]
	\item if \(\mu\triangleq \operatorname{p}^{1,2}\sharp\chi\) and \(m_t\triangleq \mu(\cdot|t)\), then \((m_t)_{t\in [0,T]}\) is continuous, \(m_0=\nu\), and
	\[\int_{[0,T]\times \rd\times\rd\times\Sd}\operatorname{dist}((\beta,\Sigma);F(t,x,m_t))\,\chi(d(t,x,\beta,\Sigma))=0.\]
\end{itemize}
The set of relaxed control processes for the initial distribution \(\nu\in\rd\) is denoted by~\(\mathbb{X}[\nu]\).
\end{definition}
The link between  \(\mathbb{X}[\nu]\) and \(\mathcal{E}_F(\nu)\) is given by the two operation defined as follows. 
\begin{definition}\label{limit:def:i}
Given \(((m_t),b,A)\in \mathcal{E}_F(\nu)\) we define \(\mathscr{i}[(m_t),b,A]\) to be a measure on \([0,T]\times\rd\times\rd\times\Sd\) such that
\begin{equation*}\label{limit:intro:chi_m_t}
	\mathscr{i}[(m_t)_{t\in [0,T]},b,A]\triangleq (\operatorname{Id},b,A)\sharp\mu\end{equation*} for \(
	\mu\triangleq \Lebesgue\star(m_t)_{t\in [0,T]}\).
\end{definition}
Notice that \(\mathscr{i}\) maps \(\mathcal{E}_F(\nu)\) to \(\mathbb{X}[\nu]\).
\begin{definition}
If \(\chi\in\mathbb{X}[\nu]\), then we define 
the mapping \(\mathscr{j}\) by the rule
 \[\mathscr{j}[\chi]\triangleq ((m_t)_{t\in [0,T]},b,A),\] where 
\begin{equation}\label{limit:intro:m}
	\mu\triangleq \operatorname{p}^{1,2}\sharp\chi,\qquad m_t\triangleq \mu(\cdot|t),\end{equation}
\begin{equation*}\label{limit:intro:b_A}b(t,x)\triangleq \int_{\rd\times \Sd}\beta\,\chi(d(\beta,\Sigma)|t,x),\qquad A(t,x)\triangleq \int_{\rd\times \Sd}\Sigma\,\chi(d(\beta,\Sigma)|t,x).\end{equation*} Moreover, if \((m_t)_{t\in [0,T]}\) is defined by~\eqref{limit:intro:m}, we denote it by \(\mathscr{f}[\chi]\).
\end{definition}
Due to Hypothesis~\ref{main:hyp:F_values}, 
\(\mathscr{j}\) maps \(\mathbb{X}[\nu]\) onto \(\mathcal{E}_F(\nu).\)
Furthermore, \(\mathscr{j}\circ\mathscr{i}=\operatorname{Id}\).

\begin{lemma}\label{limit:lm:bound} Let 
	\begin{itemize}
		\item \(\nu\in\prd\) satisfy \(\mathcal{M}_2^2(\nu)\leq \hat{c}\) for some constant \(\hat{c}\);
		\item \(\chi\in\mathbb{X}[\nu]\);
		\item  \((m_t)_{t\in [0,T]}=\mathscr{f}[\chi]\).
		\end{itemize} Then, \(\mathcal{M}_2(m_t)\) is  bounded by a constant \(C_4\) depending only on \(T,C_b,C_A\) and \(\hat{c}\).

\end{lemma}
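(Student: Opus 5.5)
The plan is to derive a Gronwall-type inequality for \(M(t)\triangleq\mathcal{M}_2^2(m_t)\), using \(V_2(x)=|x|^2\) as a Lyapunov function, in the spirit of Lemma~\ref{construct:lm:M_2}. The difference from that lemma is that here the measure argument of \(F\) is the current \(m_t\) itself, not a frozen one. First I will pass from \(\chi\) to the triple \(((m_t),b,A)=\mathscr{j}[\chi]\in\mathcal{E}_F(\nu)\). For \(\mu\)-a.e. \((t,x)\) the disintegration \(\chi(\cdot|t,x)\) is concentrated on \(F(t,x,m_t)\). By Hypothesis~\ref{main:hyp:F_values}, which gives convexity and the growth bounds, this yields
\[|b(t,x)|\leq C_b\bigl(1+|x|+\mathcal{M}_2(m_t)\bigr),\qquad \|A(t,x)\|_F\leq C_A\bigl(1+|x|^2+\mathcal{M}_2^2(m_t)\bigr).\]
The flow \((m_t)\) then solves the linear FPK equation with generator \(L_t\) built from \(b,A\), and \(m_0=\nu\).

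Next I will compute, exactly as in~\eqref{construct:ineq:L_n},
\[L_tV_2(x)\leq C'_3\bigl(1+\mathcal{M}_2^2(m_t)\bigr)+C'_4|x|^2,\]
with \(C'_3,C'_4\) depending only on \(C_b,C_A\) (and \(d\)). Integrating formally against \(m_t\) gives \(M(t)\leq M(0)+\int_0^t\bigl[C'_3+(C'_3+C'_4)M(\tau)\bigr]d\tau\). Gronwall's inequality then yields
\[M(t)\leq(\hat c+C'_3T)e^{(C'_3+C'_4)T},\]
which is the required \(C_4^2\).

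The main obstacle is justifying this integrated inequality. Two issues arise: \(V_2\) is not compactly supported, and \emph{a priori} we do not know that \(M(t)\) is finite or locally bounded. I would first establish finiteness. Since \(\chi\) is a finite measure and \(\mu\) has time marginal \(\Lebesgue\), continuity of \(t\mapsto m_t\) in \(\prd\) (part of Definition~\ref{limit:def:chi}) gives that \(\sup_t M(t)<\infty\), because the continuous image of \([0,T]\) is compact in \(\prd\). With \(\sup_tM(t)=:K<\infty\), the coefficients satisfy~\ref{bound:cond_L:Borel}--\ref{bound:cond_L:A} with constants depending on \(K\). The truncation argument of \cite[Theorem 7.1.1]{Bogachev2015} can then be applied: use cutoffs \(\zeta_R(x)V_2(x)\), possibly composed with a bounded concave function, and time test functions approximating \(\mathbf 1_{[0,t]}\). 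Letting \(R\to\infty\) by monotone/dominated convergence produces the integral inequality for \(M\), and the constants in that inequality involve only \(C_b,C_A\), not \(K\). Alternatively, the superposition principle as in the proof of Lemma~\ref{bound:lm:lip} combined with Itô's formula and localization gives the same inequality. Since \(K\) only enters to legitimize the computation, the final constant \(C_4\) depends solely on \(T,C_b,C_A,\hat c\).
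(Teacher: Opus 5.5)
Your proposal is correct and follows the paper's own argument. You pass to $\mathscr{j}[\chi]$ and use the growth bounds of Hypothesis~\ref{main:hyp:F_values} to get $L_tV_2(x)\le C\bigl(1+|x|^2+\mathcal{M}_2^2(m_t)\bigr)$ for $V_2(x)=|x|^2$, justify the integrated inequality by the truncation argument of \cite[Theorem 7.1.1]{Bogachev2015}, and close with Gronwall. Your extra step, deriving $\sup_t\mathcal{M}_2^2(m_t)<\infty$ from the $W_2$-continuity of $t\mapsto m_t$ so that the truncation and Gronwall steps are legitimate, is sound under the natural reading of Definition~\ref{limit:def:chi}, and the paper leaves it implicit.
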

\begin{proof} Let  \(((m_t)_{t\in [0,T]},b,A)\triangleq \mathscr{j}[\chi]\). This means that  \((m_t)_{t\in [0,T]}\)
		 satisfies the FPK equation
\[\partial_t m_t+L_t^*m_t=0,\]  
 where, given \(t\in [0,T]\), \(L_t\) is a generator defined for \(\phi\in C_b^2(\rd)\) by the rule: \[L_t\phi(x)=\nabla\phi(x)\cdot b(t,x)+\frac{1}{2}\operatorname{tr}\big(\nabla^\top\nabla \phi(x)\, A(t,x)\big).\] Moreover, \(m_0=\nu\). Due to Hypothesis~\ref{main:hyp:F_values}, \[|b(t,x)|\leq C_b(1+|x|+\mathcal{M}_2(m_t)),\ \  \|A(t,x)\|\leq C_A(1+|x|^2+\mathcal{M}_2^2(m_t))\]
for \((\Lebesgue\star(m_t)_{t\in [0,T]})\)-a.e. \((t,x)\).

	Using the Lyapunov function \(V_2(x)\triangleq |x|^2\), similarly to~\eqref{construct:ineq:L_n}, we have 
	\[L_tV_2(x)\leq C'_5(1+|x|^2+\mathcal{M}_2^2(m_t)).\] For some constant \(C'_5\). 
	
	Truncation arguments as in \cite[Theorem 7.1.1]{Bogachev2015} give
	\[\mathcal{M}_2^2(m_t)\leq \hat{c}+C'_5 t+ 2C'_5\int_0^t\mathcal{M}_2^2(m_{t'})\,dt'.\] Gronwall's inequality yields the result.
\end{proof}

\begin{lemma}\label{limit:lm:incl_chi} Assume that \(\mathcal{K}\subset \prd\) is a compact. Then, there exists a constant~\(C_5\) such that, if \(\chi\in \mathbb{X}[\nu]\) for some \(\nu\in\mathcal{K}\), \(\mu\triangleq \operatorname{p}^{1,2}\sharp\chi\), then, for \(\mu\)-a.e. \((t,x)\in [0,T]\times\rd\),
	\begin{equation*}\label{limit:ineq:supp}
		\operatorname{supp}(\chi(\cdot,\cdot|t,x))\subset \big\{(\beta,\Sigma)\in\rd\times\Sd:\, |\beta|\leq C_5\mathscr{a}(x),\, \|\Sigma\|_F\leq C_5\mathscr{a}^2(x)\big\},
	\end{equation*} where \((\chi(\cdot,\cdot|t,x))_{(t,x)\in [0,T]\times\rd}\) denotes the disintegration of the measure~\(\chi\).
\end{lemma}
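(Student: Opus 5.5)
The plan is to combine the uniform moment bound of Lemma~\ref{limit:lm:bound} with the growth condition in Hypothesis~\ref{main:hyp:F_values}, transferring the constraint from $\chi$ to its disintegration.

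\emph{Step 1: a uniform bound on $\mathcal{M}_2(m_t)$.} Since $\mathcal{K}$ is compact in $\prd$, the second moments are bounded on it, so there is $\hat c$ with $\mathcal{M}_2^2(\nu)\le\hat c$ for all $\nu\in\mathcal{K}$. Lemma~\ref{limit:lm:bound} then gives $\mathcal{M}_2(m_t)\le C_4$ for all $t\in[0,T]$, where $(m_t)=\mathscr{f}[\chi]$ and $C_4$ depends only on $T,C_b,C_A,\hat c$. In particular $C_4$ is the same for every $\nu\in\mathcal{K}$ and every $\chi\in\mathbb{X}[\nu]$.

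\emph{Step 2: the pointwise bound on $F$.} For every $(\beta,\Sigma)\in F(t,x,m_t)$, Hypothesis~\ref{main:hyp:F_values} gives
\[
|\beta|\le C_b(1+|x|+C_4)\le C_b(2+C_4)\mathscr{a}(x),
\qquad
\|\Sigma\|_F\le C_A(1+|x|^2+C_4^2)\le C_A(1+C_4^2)\mathscr{a}^2(x).
\]
Here we used $1+|x|\le 2\mathscr{a}(x)$ and $1+|x|^2+C_4^2\le(1+C_4^2)\mathscr{a}^2(x)$. Set $C_5\triangleq\max\{C_b(2+C_4),\,C_A(1+C_4^2)\}$ and
\[
B(x)\triangleq\big\{(\beta,\Sigma)\in\rd\times\Sd:\,|\beta|\le C_5\mathscr{a}(x),\ \|\Sigma\|_F\le C_5\mathscr{a}^2(x)\big\}.
\]
Then $F(t,x,m_t)\subset B(x)$ for every $(t,x)$.

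\emph{Step 3: passing to the disintegration.} By Definition~\ref{limit:def:chi}, $\int\operatorname{dist}((\beta,\Sigma);F(t,x,m_t))\,d\chi=0$. Disintegrating $\chi$ with respect to $\mu=\operatorname{p}^{1,2}\sharp\chi$ shows that, for $\mu$-a.e. $(t,x)$, $\operatorname{dist}((\beta,\Sigma);F(t,x,m_t))=0$ for $\chi(\cdot,\cdot|t,x)$-a.e. $(\beta,\Sigma)$. Since $F$ is upper semicontinuous with convex values, and its values are bounded by the growth condition, I take them to be closed. Under that reading the zero-distance condition gives $(\beta,\Sigma)\in F(t,x,m_t)\subset B(x)$ for $\chi(\cdot,\cdot|t,x)$-a.e. $(\beta,\Sigma)$. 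The set $B(x)$ is closed, so the support of $\chi(\cdot,\cdot|t,x)$ is contained in $B(x)$, which is the claim. Alternatively, closedness of $F$ can be avoided: $\operatorname{dist}((\beta,\Sigma),F)=0$ still places $(\beta,\Sigma)$ in $\overline{F}\subset B(x)$.

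\emph{Main obstacle.} The delicate point is measurability in Step~3, namely that $(t,x,\beta,\Sigma)\mapsto\operatorname{dist}((\beta,\Sigma);F(t,x,m_t))$ is Borel, so that Fubini applies to the disintegration. I would handle this as in the proof of Theorem~\ref{main:th:existence}. Upper semicontinuity of $F$ together with continuity of $t\mapsto m_t$ makes this function lower semicontinuous, hence Borel. With that in hand, Steps~1 and~2 are routine.
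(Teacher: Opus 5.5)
Your proof is correct and follows the paper's argument. The paper bounds $\mathcal{M}_2(m_t)$ uniformly via Lemma~\ref{limit:lm:bound}, plugs this into the growth condition of Hypothesis~\ref{main:hyp:F_values}, and transfers the zero-distance condition of Definition~\ref{limit:def:chi} to the disintegration; you simply spell out steps the paper leaves implicit. One small caveat: upper semicontinuity, convexity and boundedness do not by themselves make the values of $F$ closed, so the argument to keep is your fallback, namely that $\operatorname{dist}=0$ puts $(\beta,\Sigma)$ in $\overline{F(t,x,m_t)}\subset B(x)$ and $B(x)$ is closed.
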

\begin{proof}
	  First, notice that \(\mathcal{M}_2(\nu)\) is bounded by a constant \(\hat{c}\) determined by the compact \(\mathcal{K}\). Due to Lemma~\ref{limit:lm:bound}, we have that \(\mathcal{M}_2(m_t)\leq C_4\), where   \((m_t)_{t\in [0,T]}\triangleq \mathscr{f}[\chi]\). This, the definition of the set \(\mathbb{X}[\nu]\) and Hypothesis~\ref{main:hyp:F_values} give the conclusion of the lemma. 
\end{proof}

\begin{lemma}\label{limit:lm:chi_m_comp} Given a compact \(\mathcal{K}\subset \prd\) there exists a compact set \(\mathcal{K}'\subset\prd\) such that, if \(\chi\in\mathbb{X}[\nu]\) for some \(\nu\in \mathcal{K}\), while \((m_t)_{t\in [0,T]}=\mathscr{f}[\chi]\), then 
	\(m_t\in\mathcal{K}'\) for every \(t\in [0,T]\). Moreover, the set
	\[\big\{\mathscr{f}[\chi]:\, \chi\in \mathbb{X}[\nu],\, \nu\in \mathcal{K}\big\}\] is precompact in \(C([0,T];\prd)\).
\end{lemma}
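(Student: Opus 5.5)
The plan is to reduce everything to the a priori machinery of Section~\ref{sect:bounds}, with coefficients bounded uniformly over the whole family. Let \(\chi\in\mathbb{X}[\nu]\) with \(\nu\in\mathcal{K}\), and put \(((m_t)_{t\in[0,T]},b,A)\triangleq\mathscr{j}[\chi]\). Then \((m_t)_{t\in[0,T]}\) solves the FPK equation with generator \(L_t\) built from \(b\) and \(A\), and \(m_0=\nu\).

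\textbf{Uniform coefficient bounds.} By Lemma~\ref{limit:lm:incl_chi}, for \(\mu\)-a.e. \((t,x)\) the conditional measure \(\chi(\cdot,\cdot|t,x)\) is supported in \(\{|\beta|\leq C_5\mathscr{a}(x),\ \|\Sigma\|_F\leq C_5\mathscr{a}^2(x)\}\). This set is convex, so the barycenters \(b(t,x)\) and \(A(t,x)\) satisfy the same bounds. Also \(A(t,x)\in\Sd\), since \(\Sd\) is a closed convex cone.

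One technical point: these bounds hold only \(\mu\)-a.e., whereas \ref{bound:cond_L:b}--\ref{bound:cond_L:A} ask for them everywhere. I would redefine \(b\) and \(A\) to be \(0\) on the \(\mu\)-null exceptional Borel set. This does not change the weak formulation, because the equation only integrates against \(\mu\). After this modification, \(b\) and \(A\) satisfy \ref{bound:cond_L:Borel}--\ref{bound:cond_L:A} with \(C^0_b=C^0_A=C_5\), a constant depending only on \(\mathcal{K}\).

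\textbf{Construction of \(\mathcal{K}'\).} Let \(g\) be the moment gauge function for \(\mathcal{K}\) given by Proposition~\ref{gauge:prop:g}, and set \(\hat{C}\triangleq\sup_{\nu\in\mathcal{K}}\int V_g\,d\nu<\infty\) using~\ref{gauge:cond_g:int}. Lemma~\ref{bound:lm:growth} then yields \(\int V_g(x)m_t(dx)\leq C_2\) for all \(t\in[0,T]\), where \(C_2\) depends only on \(d\), \(C_5\) and \(\hat{C}\). Define
\[\mathcal{K}'\triangleq\Big\{m\in\prd:\ \int_{\rd}V_g(x)\,m(dx)\leq C_2\Big\}.\]
I would check compactness of \(\mathcal{K}'\) in \(\prd\) exactly as in the proof of Lemma~\ref{comapct:lm:limit}:
\begin{itemize}
\item \(V_g/\mathscr{a}^2=g(\mathscr{a})\to\infty\) by~\ref{gauge:cond_g:infinity}, which gives tightness and uniform integrability of the second moments;
\item \(V_g\) is lower semicontinuous and nonnegative, so \(\mathcal{K}'\) is narrowly closed by \cite[Lemma 5.1.7]{ambrosio};
\item together these give compactness in \(W_2\) by \cite[Proposition 7.1.5]{ambrosio}.
\end{itemize}
This proves the first assertion.

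\textbf{Precompactness in \(C([0,T];\prd)\).} Lemma~\ref{bound:lm:lip}, applied with the same constants, gives \(W_2^2(m_{t_1},m_{t_2})\leq C_3|t_1-t_2|\) with \(C_3\) uniform over the family. So the family is equicontinuous and takes values in the compact set \(\mathcal{K}'\), and the Arzelà–Ascoli theorem yields precompactness.

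The main obstacle is the first step: obtaining coefficient bounds that hold uniformly over the family and in the exact form~\ref{bound:cond_L:b}--\ref{bound:cond_L:A}. This requires combining Lemma~\ref{limit:lm:incl_chi} with the a.e. modification above. I also need that the superposition argument behind Lemma~\ref{bound:lm:lip} applies, which requires the narrow continuity of \(t\mapsto m_t\); that is part of the definition of \(\mathbb{X}[\nu]\).
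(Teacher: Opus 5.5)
Your proposal is correct and follows the paper's own route. The coefficient bounds come from Lemma~\ref{limit:lm:incl_chi}. Lemma~\ref{bound:lm:growth}, applied with the moment gauge function of \(\mathcal{K}\), places every \(m_t\) in the compact sublevel set of \(V_g\). Precompactness then follows from the uniform H\"older estimate of Lemma~\ref{bound:lm:lip} and Arzel\`a--Ascoli, which is exactly the content of Lemma~\ref{comapct:lm:limit} that the paper invokes. Your barycenter argument and the redefinition of \(b,A\) on a \(\mu\)-null set only make explicit details that the paper leaves implicit.
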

\begin{proof}Notice that \(\mathscr{f}[\chi]\in\mathcal{S}_F[\nu]\). This means that \((m_t)_{t\in [0,T]}\) satisfies the FPK equation with the coefficients \(b,A\) such that \(|b(t,x)|\leq C_5\mathscr{a}(x)\) and \(\|A(t,x)\|_F\leq C_5\mathscr{a}^2(x)\) (the latter estimate follows from Lemma~\ref{limit:lm:incl_chi}). Applying Lemma~\ref{bound:lm:growth}, we obtain the first statement. To deduce the second statement, it suffices to additionally apply Lemma~\ref{comapct:lm:limit}.
\end{proof}

\begin{lemma}\label{limit:lm:chi_lim}
	Assume that \(\{\nu^n\}_{n=1}^\infty\subset\prd\) converges to some \(\nu\in\prd\). Additionally, let 
	\begin{itemize}
		\item \(\{\chi^n\}_{n=1}^\infty\) be a sequence of measures on \([0,T]\times\rd\times\rd\times\Sd\) such that each \(\chi^n\in\mathbb{X}[\nu^n]\);
		\item for each \(n\), \((m_t^n)\triangleq \mathscr{f}[\chi^n]\).
	\end{itemize} Then, there exists a subsequence \(\{n_k\}_{k=1}^\infty\), \(\chi\in\mathbb{X}[\nu]\) such that, for \(\chi^{(k)}\triangleq \chi^{n_k}\), and \(m^{(k)}_t\triangleq m_t^{n_k}\) one has that 
	\begin{itemize}
		\item \(\{\chi^{(k)}\}_{k=1}^\infty\) narrowly converges to \(\chi\);
		\item \(\sup_{t\in [0,T]}W_2(m^{(k)}_t,m_t)\rightarrow 0\) as \(k\rightarrow\infty\), where \((m_t)_{t\in [0,T]}=\mathscr{f}[\chi]\).
	\end{itemize} 	
\end{lemma}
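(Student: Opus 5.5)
We follow the scheme of the proof of Theorem~\ref{main:th:existence}, replacing the explicit discretization by the relaxed control processes \(\chi^n\). Put \(\mathcal{K}\triangleq\{\nu^n\}_{n=1}^\infty\cup\{\nu\}\); it is compact in \(\prd\). By Lemma~\ref{limit:lm:chi_m_comp}, there is a compact \(\mathcal{K}'\subset\prd\) with \(m^n_t\in\mathcal{K}'\) for all \(n,t\), and the flows \((m^n_t)_{t\in[0,T]}\) form a precompact subset of \(C([0,T];\prd)\). Hence we can extract a subsequence with \(\sup_{t}W_2(m^{(k)}_t,m_t)\to0\) for some continuous flow \((m_t)_{t\in[0,T]}\), where \(m_0=\lim\nu^{n_k}=\nu\). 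Lemma~\ref{compact:lm:equivalence} then gives narrow convergence \(\mu^{(k)}\triangleq\operatorname{p}^{1,2}\sharp\chi^{(k)}=\Lebesgue\star(m^{(k)}_t)\to\mu\triangleq\Lebesgue\star(m_t)\).

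Next we show tightness of \(\{\chi^{(k)}\}\). By Lemma~\ref{limit:lm:incl_chi}, applied to the compact \(\mathcal{K}\), the conditional measures \(\chi^{(k)}(\cdot,\cdot|t,x)\) are supported in \(\{|\beta|\le C_5\mathscr{a}(x),\ \|\Sigma\|_F\le C_5\mathscr{a}^2(x)\}\) with \(C_5\) independent of \(k\). Therefore, with the compact sets \(Z_R\) defined as in Section~\ref{sect:construct} using the constant \(C_5\), we have \(\chi^{(k)}(Z_R)=\mu^{(k)}([0,T]\times\mathbb{B}_R)\). Tightness of \(\{\mu^{(k)}\}\) thus yields tightness of \(\{\chi^{(k)}\}\). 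By Prokhorov's theorem we pass to a further subsequence with \(\chi^{(k)}\to\chi\) narrowly, and then \(\operatorname{p}^{1,2}\sharp\chi=\mu\), \(\operatorname{p}^1\sharp\chi=\Lebesgue\), and \(\mathscr{f}[\chi]=(m_t)\), which gives the second bullet.

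It remains to check \(\chi\in\mathbb{X}[\nu]\). For the equation, fix \(\varphi\in C^{1,2}_c((0,T)\times\rd)\) vanishing for \(|x|>R\). The integrand is continuous and, on the support of each \(\chi^{(k)}\) intersected with \(\{|x|\le R\}\), it is bounded, since \(\beta,\Sigma\) are bounded there. Multiplying by a continuous cutoff equal to \(1\) on \(Z_R\), we pass to the limit as in~\eqref{construct:equality:chi_limit}; the limit measure \(\chi\) is also concentrated on the corresponding set because these sets are closed. Continuity of \((m_t)\) and \(m_0=\nu\) are already known.

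The main obstacle is the constraint \(\int\operatorname{dist}((\beta,\Sigma);F(t,x,m_t))\,d\chi=0\), because the measure argument \(m^{(k)}_t\) of \(F\) changes with \(k\). We use the enlargements \(F^{[l]}\) from Section~\ref{sect:construct}; time-enlargement is not needed. By uniform convergence, for \(k\ge k_0(l)\) we have \(W_2(m^{(k)}_t,m_t)\le2^{-l}\) for all \(t\), so \(F(t,x,m^{(k)}_t)\subset F^{[l]}(t,x,m_t)\). Hence \(\int\operatorname{dist}((\beta,\Sigma),F^{[l]}(t,x,m_t))\,d\chi^{(k)}=0\). The function \((t,x,\beta,\Sigma)\mapsto\operatorname{dist}((\beta,\Sigma),F^{[l]}(t,x,m_t))\) is nonnegative and lower semicontinuous, by upper semicontinuity of \(F\) and continuity of \(t\mapsto m_t\), exactly as used in the proof of Theorem~\ref{main:th:existence}. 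So \cite[Lemma 5.1.7]{ambrosio} gives the same identity for \(\chi\). Thus for every \(l\), \(\chi\)-a.e. \((\beta,\Sigma)\) lies in \(F^{[l]}(t,x,m_t)\), and intersecting over \(l\) using~\eqref{construct:equality:F_limit} we get \((\beta,\Sigma)\in F(t,x,m_t)\) \(\chi\)-a.e., i.e., the required integral vanishes. Hence \(\chi\in\mathbb{X}[\nu]\).
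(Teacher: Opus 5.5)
Your proposal reproduces the paper's proof essentially step by step. The common steps are: extraction of a uniformly convergent subsequence via Lemma~\ref{limit:lm:chi_m_comp}, narrow convergence of $\mu^{(k)}$ via Lemma~\ref{compact:lm:equivalence}, tightness of $\{\chi^{(k)}\}$ from Lemma~\ref{limit:lm:incl_chi}, passage to the limit in the equation on a compact set, and the enlargement trick for the constraint. Your cutoff argument for the equation is, if anything, more careful than the paper's, and using $F^{[l]}$ instead of the paper's measure-only enlargement $\widetilde{F}^{[l]}$ is immaterial.

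However, the step you single out as the main obstacle rests on a claim that is false as stated; the paper makes the same claim. Lower semicontinuity of $(t,x,\beta,\Sigma)\mapsto\operatorname{dist}((\beta,\Sigma),F^{[l]}(t,x,m_t))$ does not follow from upper semicontinuity of $F$. The reason is that the union defining $F^{[l]}$ runs over a closed $W_2$-ball, and such a ball is not compact in $\prd$. Here is a counterexample. Let $d=1$, $r\triangleq 2^{-l}$ and $\eta_n\triangleq(1-r^2n^{-2})\delta_0+r^2n^{-2}\delta_n$. Then $W_2(\eta_n,\delta_0)=r$ for all $n$, while $\eta_n\to\delta_0$ narrowly. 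Hence no subsequence of $\{\eta_n\}$ converges in $W_2$, and $E\triangleq\{(1/n,\eta_n):\,n\geq 1\}$ is closed in $\mathbb{R}\times\prd$. Set $F(t,x,m)\triangleq[0,1]\times\{0\}$ if $(x,m)\in E$ and $F(t,x,m)\triangleq\{(0,0)\}$ otherwise. This $F$ is upper semicontinuous and bounded, with compact convex values, so it satisfies Hypothesis~\ref{main:hyp:F_values}. Take $m_t\equiv\delta_0$. Then $(1,0)\in F^{[l]}(t,1/n,\delta_0)$ for every $n$, but $F^{[l]}(t,0,\delta_0)=\{(0,0)\}$. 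So $x\mapsto\operatorname{dist}((1,0),F^{[l]}(t,x,\delta_0))$ equals $0$ at $x=1/n$ and $1$ at $x=0$, and it is not lower semicontinuous. Consequently \cite[Lemma 5.1.7]{ambrosio} cannot be invoked the way you (and the paper) do.

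The lemma itself is still true, and the repair uses compactness you already have. Since $m^{(k)}_t\in\mathcal{K}'$ for all $k$ and $t$, replace $F^{[l]}$ by $\widehat{F}^{[l]}(t,x,m)\triangleq\overline{\operatorname{co}}\bigcup\{F(t,x,m'):\,m'\in\mathcal{K}',\,W_2(m,m')\leq 2^{-l}\}$. Two facts carry over unchanged:
\begin{itemize}
\item the inclusion $F(t,x,m^{(k)}_t)\subset\widehat{F}^{[l]}(t,x,m_t)$ for $k\geq k_0(l)$;
\item the bound $\bigcap_l\widehat{F}^{[l]}(t,x,m_t)\subset\overline{F(t,x,m_t)}$, which follows from upper semicontinuity and convexity of the values.
\end{itemize}
Now the distance is genuinely lower semicontinuous. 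Suppose $(t_j,x_j)\to(t,x)$ and $w_j\in\widehat{F}^{[l]}(t_j,x_j,m_{t_j})$ converge to $w$. Approximate each $w_j$ by a Carath\'eodory combination of a fixed number of points $v_{j,i}\in F(t_j,x_j,m'_{j,i})$ with $m'_{j,i}\in\mathcal{K}'$. Compactness of $\mathcal{K}'$ and of the simplex, the growth bound, continuity of $t\mapsto m_t$ and upper semicontinuity of $F$ together give $w\in\widehat{F}^{[l]}(t,x,m_t)$. Combined with local boundedness of the values, this is exactly lower semicontinuity of the distance.

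Alternatively, you can avoid enlargements altogether. Whenever $t_k\to t$, uniform convergence gives $m^{(k)}_{t_k}\to m_t$ in $W_2$. By upper semicontinuity, every limit of points $(t_k,x_k,\beta_k,\Sigma_k)$ with $(\beta_k,\Sigma_k)\in\overline{F(t_k,x_k,m^{(k)}_{t_k})}$ then lies in the closed set $\{(t,x,\beta,\Sigma):\,(\beta,\Sigma)\in\overline{F(t,x,m_t)}\}$. Tightness plus the portmanteau theorem, applied to the closed $\varepsilon$-neighbourhoods of this set, then shows that $\chi$ is concentrated on it.
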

\begin{proof}
	For each natural \(n\), we put \(\mu^n\triangleq \operatorname{p}^{1,2}\sharp\chi^n\), \((m^n_t)_{t\in [0,T]}\triangleq \mathscr{f}[\chi^n]\). Lemma~\ref{limit:lm:chi_m_comp} gives that the sequence \(\{(m^n_t)_{t\in [0,T]}\}_{n=1}^\infty\) is precompact. We choose a subsequence \(\{n_k\}_{k=1}^\infty\) and have that \(\sup_{t\in [0,T]}W_2(m^{(k)}_t,m_t)\rightarrow 0\) as \(k\rightarrow\infty\) for some \((m_t)_{t\in [0,T]}\in C([0,T];\prd)\). Furthermore, set \[\mu^{(k)}\triangleq \operatorname{p}^{1,2}\sharp\chi^{(k)}=\Lebesgue\star(m^{(k)})_{t\in [0,T]}.\] Additionally, using Lemma~\ref{compact:lm:equivalence}, we deduce that \(\{\mu^{(k)}\}_{n=1}^\infty\) converges narrowly to \(\mu\triangleq \Lebesgue\star (m_t)_{t\in [0,T]}\). In particular,~\(\{\mu^{(k)}\}_{k=1}^\infty\) is tight. From this and Lemma~\ref{limit:lm:incl_chi}, it directly follows that the sequence~\(\{\chi^{(k)}\}_{k=1}^\infty\) is tight. We assume that this sequence converges narrowly to some measure~\(\chi\) on \([0,T]\times\rd\times\rd\times\Sd\). Moreover, \(\operatorname{p}^{1,2}\sharp\chi=\mu\). 
	
	Recall that, since \(\chi^{(k)}\in\mathbb{X}[\nu^{(k)}]\), we have, for each \(\varphi\in C_c^{1,2}((0,T)\times\rd)\),
	\[\begin{split}
		\int_{[0,T]\times \rd\times\rd\times\Sd}
		\Big[\partial_t \varphi(t,&x)+\nabla\varphi(t,x)\cdot \beta\\&+\frac{1}{2}\operatorname{tr}\big(\nabla^\top\nabla \varphi(t,x)\cdot\Sigma\big)\Big]\chi^{(k)}(d(t,x,\beta,\Sigma))=0.\end{split}\]
	By Lemma~\ref{limit:lm:chi_m_comp}, one can consider the integral above over the compact set
	\[
	\begin{split}
		\big\{(t,x,\beta,\Sigma)\in [0,T]\times&\rd\times\rd\times\Sd:\\ &|x|\leq R,\, |\beta|\leq C_5(1+R^2)^{1/2}, \, \|\Sigma\|_F\leq C_5(1+R^2)\big\},
	\end{split}
	\]
	where \(R\) is chosen so that \(\varphi(t,x)\equiv 0\) whenever \(|x|>R\).
	Passing to the limit gives, for every \(\varphi\in C_c^{1,2}((0,T)\times\rd)\),
	\begin{equation}\label{limit:equality:varphi_limit}\begin{split}
			\int_{[0,T]\times \rd\times\rd\times\Sd}
			\Big[\partial_t \varphi(t,&x)+\nabla\varphi(t,x)\cdot \beta\\&+\frac{1}{2}\operatorname{tr}\big(\nabla^\top\nabla \varphi(t,x)\cdot\Sigma\big)\Big]\chi(d(t,x,\beta,\Sigma))=0.\end{split}\end{equation}
	
	Now, for each natural \(l\), define
	\[\widetilde{F}^{[l]}(t,x,m)\triangleq \overline{\operatorname{co}}\bigcup_{m':\, W_2(m,m')\leq 2^{-l}}F(t,x,m').\] As before,
	\begin{equation}\label{limit:equality:F_F_l}F(t,x,m)=\bigcap_{l=1}^\infty\widetilde{F}^{[l]}(t,x,m).\end{equation} Since the sequence of flows of probabilities \(\{(m^{(k)}_t)_{t\in [0,T]}\}_{n=1}^\infty\) converges to \((m_t)_{t\in [0,T]}\) in \(C([0,T];\prd)\), for each \(l\) and sufficiently large \(k\) depending on \(l\),
	\[F(t,x,m^{(k)}_t)\subset \widetilde{F}^{[l]}(t,x,m_t).\] Thus, for each \(l\), there exists \(k_0\) such that, for all \(k\geq k_0\),
	\[\int_{[0,T]\times\rd\times\rd\times \Sd}\operatorname{dist}((\beta,\Sigma),\widetilde{F}^{[l]}(t,x,m_t))\,\chi^{k}(d(t,x,\beta,\Sigma))=0.\] Passing to the limit and using the lower semicontinuity of the distance function \((t,x,\beta,\Sigma)\mapsto \operatorname{dist}((\beta,\Sigma),\widetilde{F}^{[l]}(t,x,m_t))\) and \cite[Lemma 5.1.7]{ambrosio}, we obtain
	\begin{equation*}\label{limit:equality:dist}\int_{[0,T]\times\rd\times\rd\times \Sd}\operatorname{dist}((\beta,\Sigma),\widetilde{F}^{[l]}(t,x,m_t))\,\chi^\infty(d(t,x,\beta,\Sigma))=0.\end{equation*} Now we use~\eqref{limit:equality:F_F_l}. Thus,
	\begin{equation*}\label{limit:equality:dist}\int_{[0,T]\times\rd\times\rd\times \Sd}\operatorname{dist}((\beta,\Sigma),F(t,x,m_t))\,\chi^\infty(d(t,x,\beta,\Sigma))=0.\end{equation*}
	This together with the equalities~\eqref{limit:equality:varphi_limit} and \(m_0=\nu\) completes the proof.
	
\end{proof}

\begin{proof}[Proof of Theorem~\ref{main:th:compact}] Without loss of generality, assume that, for each~$n$, there exists a measure $\nu^n\in\mathcal{K}$ such that $(m^n_t)_{t\in [0,T]}\in \mathcal{S}_F(\nu^n)$ and $W_2(\nu^n,\nu)\to 0$ as $n\to\infty$. The inclusion $(m^n_t)_{t\in [0,T]}\in \mathcal{S}_F(\nu^n)$ means that 
	$((m^n_t)_{t\in [0,T]},b^n,A^n)\in \mathcal{E}_F(\nu^n)$ for some functions $b^n:[0,T]\times\rd\rightarrow\rd$, $A^n:[0,T]\times\rd\rightarrow\Sd$. Furthermore, given~$n$, we construct a measure~$\chi^n\triangleq \mathscr{i}[(m^n_t)_{t\in [0,T]},b^n,A^n]$. Lemma~\ref{limit:lm:chi_lim} yields that, up to a subsequence, $\{(\chi^n,(m_t^n)_{t\in [0,T]})\}_{n=1}^\infty$ converges to some $(\chi,(m_t)_{t\in [0,T]})$ in $\mathcal{M}([0,T]\times\rd\times\rd\times\Sd)\times C([0,T];\prd)$, while $(m_t)_{t\in [0,T]}=\mathscr{f}[\chi]$. Since $\chi\in\mathbb{X}[\nu]$, $(m_t)_{t\in [0,T]}\in\mathcal{S}_F(\nu)$.
\end{proof}

\section{Analysis of the control problem for FPK inclusion}\label{sect:control} The aim of this section is to prove Theorem~\ref{main:th:control}. 

\begin{proof}[Proof of Theorem~\ref{main:th:control}]

	Consider the following functional for \(\chi\in\mathbb{X}[\nu]\):
	\[I[\chi]\triangleq G(m_T)+\int_{[0,T]\times \rd\times\rd\times\Sd}f(t,x,m_t,\beta,\Sigma)\,\chi(d(t,x,\beta,\Sigma))\] with \(m_t=(\operatorname{p}^{1,2}\sharp\chi)(\cdot|t)\). We claim that \(I\) is real-valued. Indeed, from Lemma~\ref{limit:lm:incl_chi} and the assumptions of the theorem, for \(\chi\)-a.e. \((t,x,\beta,\Sigma)\),
	\[|f(t,x,m_t,\beta,\Sigma)|\leq C_f(m_t)C_5\mathscr{a}^2(x).\] Moreover, using Lemma~\ref{limit:lm:chi_m_comp}, we have that \(m_t\) lies in the compact \(\mathcal{K}_0\) and, since \(C_f\) is bounded on this compact, the integral is finite for each \(\chi\in\mathbb{X}[\nu]\) and uniformly bounded on \(\mathbb{X}[\nu]\). Moreover, \(G\) is bounded below on the compact \(\mathcal{K}_0\). Thus, \(I\) is bounded below on \(\mathbb{X}[\nu]\). 
	
	Let \(\{\nu^n\}_{n=1}^\infty\subset\prd\) converge to some \(\nu\in \prd\) and let a sequence of measures \(\{\chi^n\}_{n=1}^\infty\) be such that \(\chi^n\in\mathbb{X}[\nu^n]\).
	We wish to prove that there exists \(\chi\in \mathbb{X}[\nu]\) such that 
	\begin{equation}\label{control:ineq:liminf}
		\liminf_{n\rightarrow\infty}I[\chi^n]\geq I[\chi].
	\end{equation}
	
Extracting a subsequence if necessary, we may assume that $(I[\chi^n])_{n=1}^\infty$ converges.

For each \(n\), put \(\mu^n\triangleq \operatorname{p}^{1,2}\sharp\chi^n\) and \((m_t^n)_{t\in [0,T]}\triangleq \mathscr{f}[\chi^n]\). 
	
	By Lemma~\ref{limit:lm:chi_m_comp}, the sequence \(\{(\chi^n,(m_t^n)_{t\in [0,T]})\}_{n=1}^\infty\) is precompact in \(\mathcal{M}([0,T]\times\rd\times\rd\times\Sd)\times C([0,T];\prd)\). Relabeling, assume it converges to some pair \((\chi^\infty,(m_t^\infty)_{t\in [0,T]})\), where \(\chi^\infty\in\mathbb{X}[\nu]\) and \((m_t^\infty)_{t\in [0,T]}=\mathscr{f}[\chi^\infty]\). 
	Now we wish to prove that 
	\begin{equation}\label{control:ineq:main}
		\begin{split}	
			\lim_{n\to\infty}\int_{[0,T]\times \rd\times\rd\times\Sd}f(t,x,m^n_t,&\beta,\Sigma)\chi^n(d(t,x,\beta,\Sigma))\\\geq 
			\int_{[0,T]\times \rd\times\rd\times\Sd}f(&t,x,m_t^\infty,\beta,\Sigma)\chi^\infty(d(t,x,\beta,\Sigma)).
	\end{split}\end{equation}
	To this end, we fix \(\varepsilon>0\).
	
	Lemma~\ref{limit:lm:chi_m_comp} applied to the compact \(\mathcal{K}\triangleq \{\nu^n\}_{n=1}^\infty\cup\{\nu\}\) gives that the sets of probability measures \(\{m_t^n:\, t\in [0,T],\, n\in\mathbb{N}\cup\{\infty\}\}\), \(\{m_t:\, t\in [0,T]\}\) lie in a compact \(\mathcal{K}'\). Let \(g\) be the corresponding moment gauge function from Proposition~\ref{gauge:prop:g}. The second moments are uniformly bounded on \(\mathcal{K}'\) (see Lemma~\ref{limit:lm:bound}). Thus, due to Lemma~\ref{limit:lm:incl_chi}, for \(\chi^n\)-a.e. \((t,x,\beta,\Sigma)\),
	\[|f(t,x,m^n_t,\beta,\Sigma)|\leq C'_6\mathscr{a}^2(x)\] for some constant \(C'_6\). 
	Hence,
	\[\int_{[0,T]\times \rd\times\rd\times\Sd}g(|f(t,x,m^n_t,\beta,\Sigma)|^{1/2})\cdot|f(t,x,m^n_t,\beta,\Sigma)|\,\chi^n(d(t,x,\beta,\Sigma))\] is uniformly bounded in \(n\in\mathbb{N}\cup\{\infty\}\). Since \(g\) diverges monotonically,
	\[\int_{|f(t,x,m^n_t,\beta,\Sigma)|\geq N}|f(t,x,m^n_t,\beta,\Sigma)|\,\chi^n(d(t,x,\beta,\Sigma))\rightarrow 0\quad\text{as }N\rightarrow\infty\] uniformly in \(n\in\mathbb{N}\cup\{\infty\}\). Now set
	\[f_N(t,x,\eta,\beta,\Sigma)\triangleq f(t,x,\eta,\beta,\Sigma)\vee (-N)\] and choose \(N_\varepsilon\) such that
	\begin{equation}\label{control:ineq:f_N:eps}
		\int_{[0,T]\times \rd\times\rd\times\Sd}\big[f(t,x,m^n_t,\beta,\Sigma)-f_{N_\varepsilon}(t,x,m^n_t,\beta,\Sigma)\big]\,\chi^n(d(t,x,\beta,\Sigma))\geq -\varepsilon.
	\end{equation} For \(t\in [0,T],x\in\rd,\eta\in\mathcal{K},\beta\in\rd,\Sigma\in\Sd\), define 
	\[\begin{split}
		f_{N_\varepsilon,k}(t,x,\eta,\beta,\Sigma)\triangleq&{}\\ \inf\Bigg\{f_{N_\varepsilon}(t',x',&\eta',\beta',\Sigma')\\+k\big(|&t-t'|+|x-x'|+W_2(\eta,\eta')+|\beta-\beta'|+\|\Sigma-\Sigma'\|_F\big):\\ &{}\hspace{50pt} t'\in [0,T],\, x'\in\rd,\, \eta'\in\mathcal{K},\,\beta'\in\rd,\,\Sigma'\in\Sd \Bigg\}. \end{split}\]
	Then, for every \(\eta\in\mathcal{K}'\),
	\begin{equation}\label{control:ineq:f_N_k}
		\begin{split}
			-N_\varepsilon\leq f_{N_\varepsilon,k}(t,x,\eta,\beta,\Sigma)\leq f_{N_\varepsilon}(t,x,\eta,\beta,\Sigma)\leq |f(t,x,\eta,\beta,\Sigma)|\leq  C'_6\mathscr{a}^2(x).\end{split}\end{equation}  Moreover, \(f_{N_\varepsilon,k}\) increases pointwise to \(f_{N_\varepsilon}\).

	Since the probabilities \(m_t^n\) lie in the compact \(\mathcal{K}'\),   there exists \(R_\varepsilon\) such that 
	\[\int_{|x|>R_\varepsilon}\mathscr{a}^2(x)\mu^n(d(t,x))<\varepsilon/C'_6.\] Here we denote \(\mu^n\triangleq \operatorname{p}^{1,2}\sharp\chi^n\). We put
	\[K_\varepsilon\triangleq \big\{(t,x,\beta,\Sigma):\, t\in [0,T],\, |x|\leq R_{\varepsilon},\, |\beta|\leq  C_5(1+R_{\varepsilon }^2)^{1/2},\, \|\Sigma\|_F\leq  C_5(1+R_{\varepsilon }^2)\big\},\]
	\[\widetilde{K}_\varepsilon\triangleq ([0,T]\times\rd\times\rd\times\Sd)\setminus K_\varepsilon.\] Thanks to Lemma~\ref{limit:lm:incl_chi},
	\(\chi^n(K_\varepsilon)=\mu^n([0,T]\times \{|x|\leq R\})\).  
	
	Thus, estimate~\eqref{control:ineq:f_N_k} implies that, for each \(n\in \mathbb{N}\cup\infty\) and \(k\in\mathbb{N}\),
	\begin{equation}\label{control:ineq_compliment}
		\int_{\widetilde{K}_\varepsilon}|f_{N_\varepsilon,k}(t,x,m^n_t,\beta,\Sigma)|\leq \varepsilon.
	\end{equation} Furthermore, each function \(f_{N_\varepsilon,k}\) is continuous, hence uniformly continuous on the compact set \(\{(t,x,\eta,\beta,\Sigma): (t,x,\beta,\Sigma)\in K_\varepsilon,\, \eta\in\mathcal{K}'\}\). Since \((m_t^n)\) converges uniformly to \((m^\infty_t)\), for sufficiently large \(n\) and all \((t,x,\beta,\Sigma)\in K_\varepsilon\),
	\begin{equation}\label{control:ineq:f_N_k_infi}|f_{N_\varepsilon,k}(t,x,m^n_t,\beta,\Sigma)-f_{N_\varepsilon,k}(t,x,m^\infty_t,\beta,\Sigma)|\leq\varepsilon/T.\end{equation} Hence, for   each \(k\) we have that
	\[\begin{split}
		\lim_{n\rightarrow\infty}&\int_{[0,T]\times \rd\times\rd\times\Sd}f(t,x,m^n_t,\beta,\Sigma)\,\chi^n(d(t,x,\beta,\Sigma))\\&\geq \lim_{n\rightarrow\infty}\int_{[0,T]\times \rd\times\rd\times\Sd}f_{N_\varepsilon}(t,x,m^n_t,\beta,\Sigma)\,\chi^n(d(t,x,\beta,\Sigma))-\varepsilon\\ &\geq 
		\lim_{n\rightarrow\infty}\int_{[0,T]\times \rd\times\rd\times\Sd}f_{N_\varepsilon,k}(t,x,m^n_t,\beta,\Sigma)\,\chi^n(d(t,x,\beta,\Sigma))-\varepsilon \\ &= \lim_{n\rightarrow\infty}\Bigg[\int_{K_\varepsilon}f_{N_\varepsilon,k}(t,x,m^n_t,\beta,\Sigma)\,\chi^n(d(t,x,\beta,\Sigma))\\&\hspace{70pt}+\int_{\widetilde{K}_\varepsilon}f_{N_\varepsilon,k}(t,x,m^n_t,\beta,\Sigma)\,\chi^n(d(t,x,\beta,\Sigma))\Bigg]-\varepsilon \\ &\geq \lim_{n\rightarrow\infty}\int_{K_\varepsilon}f_{N_\varepsilon,k}(t,x,m^\infty_t,\beta,\Sigma)\,\chi^n(d(t,x,\beta,\Sigma))-3\varepsilon
	\end{split} 
	\] In the last inequality, we utilized estimates~\eqref{control:ineq_compliment},~\eqref{control:ineq:f_N_k_infi}.
	Using the narrow convergence of \(\{\chi^n\}_{n=1}^\infty\) to \(\chi^\infty\), we get
	\[
	\begin{split}
		\lim_{n\rightarrow\infty}\int_{[0,T]\times \rd\times\rd\times\Sd}&f(t,x,m^n_t,\beta,\Sigma)\,\chi^n(d(t,x,\beta,\Sigma))
		\\ \geq &\int_{K_\varepsilon}f_{N_\varepsilon,k}(t,x,m^\infty_t,\beta,\Sigma)\,\chi^\infty(d(t,x,\beta,\Sigma))-3\varepsilon.	
	\end{split}
	\] Now we use estimate~\eqref{control:ineq_compliment} for \(n=\infty\) and obtain
	\[
	\begin{split}
		\lim_{n\rightarrow\infty}\int_{[0,T]\times \rd\times\rd\times\Sd}&f(t,x,m^n_t,\beta,\Sigma)\,\chi^n(d(t,x,\beta,\Sigma))
		\\ \geq &\int_{[0,T]\times \rd\times\rd\times\Sd}f_{N_\varepsilon,k}(t,x,m^\infty_t,\beta,\Sigma)\,\chi^\infty(d(t,x,\beta,\Sigma))-4\varepsilon.	
	\end{split}
	\]
	Then, letting  \(k\to\infty\) (using monotone convergence), we deduce
	\[
	\begin{split}
		\lim_{n\rightarrow\infty}\int_{[0,T]\times \rd\times\rd\times\Sd}f(t,x,m^n_t,\beta,&\Sigma)\,\chi^n(d(t,x,\beta,\Sigma))
		\\ \geq \int_{[0,T]\times \rd\times\rd\times\Sd}f_{N_\varepsilon}(t&,x,m^\infty_t,\beta,\Sigma)\,\chi^\infty(d(t,x,\beta,\Sigma))-3\varepsilon.	
	\end{split}
	\] Since \(f_{N_\varepsilon}\ge f\), letting \(\varepsilon\to0\) implies~\eqref{control:ineq:main}. 
	
	Recall now that the function \(G\) is lower semicontinuous, the inclusion \(m^n_t\in\mathcal{K}'\) and the fact that \(C_G\) is bounded below on \(\mathcal{K}'\) (the latter is due to the compactness of \(\mathcal{K}'\)). Thus, we showed the fulfillment of~\eqref{control:ineq:liminf} for \(\chi\triangleq\chi^\infty\).
	
	Now let \(\{\chi^n\}_{n=1}^\infty\) be minimizing for \(I\) over \(\mathbb{X}[\nu]\). Inequality~\eqref{control:ineq:liminf} immediately gives that \(\chi\) minimizes \(I\) over \(\mathbb{X}[\nu]\). Define the triple \(((m_t)_{t\in [0,T]},b,A)\) to be equal to \(\mathscr{j}[\chi]\). Due to the convexity of the function~\(f\),
	\[\operatorname{Val}(\nu)=J[(m_t)_{t\in [0,T]},b,A].\]
	
	The proof that \(\operatorname{Val}\) is lower semicontinuous is analogous. Here we choose, for each~\(n\),  \(((m^n_t),b^n,A^n)\in\mathcal{E}_F(\nu^n)\) such that 
	\[\operatorname{Val}(\nu^n)=J[(m_t^n),b^n,A^n]\] and let \(\chi^n\triangleq \mathscr{i}((m_t^n)_{t\in [0,T]},b^n,A^n)\). Notice that  \(\operatorname{Val}(\nu^n)=I[\chi^n]\).
	 Inequality~\eqref{control:ineq:liminf} gives, for the limiting measure \(\chi\in\mathbb{X}[\nu]\),
	 \[I[\chi]\leq \liminf_{n\rightarrow\infty} \operatorname{Val}(\nu^n).\] Moreover,  \(\operatorname{Val}(\nu)\leq I[\chi]\).
	 Thus,
	\[\lim_{n\to\infty}\operatorname{Val}(\nu^n)\geq \operatorname{Val}[\nu^\infty].\] This completes the proof.
\end{proof}

\begin{acknowledgement} The research was supported by the Russian Science Foundation (project No. 24-11-00123).
\end{acknowledgement}

\bibliography{McKV.bib}

\end{document}